\tikzset{dots/.append style={ultra thick, fill=none}}
\newcommand{\NN}{\mathbb{N}}
\newcommand{\QQ}{\mathbb{Q}}
\newcommand{\ZZ}{\mathbb{Z}}
\newcommand{\RR}{\mathbb{R}}
\newcommand{\TT}{\mathbb{T}}
\newcommand{\tom}{\mathbb{T}^\omega}
\numberwithin{equation}{section}
\theoremstyle{plain}
\newtheorem{question}[equation]{Question}
\newtheorem{definition}[equation]{Definition}
\newtheorem{fact}[equation]{Fact}
\newtheorem{theorem}[equation]{Theorem}
\newtheorem{remark}[equation]{Remark}
\newtheorem{lemma}[equation]{Lemma}
\newtheorem{corollary}[equation]{Corollary}
\begin{document}
	
	\title[On the doubling condition in the infinite-dimensional setting]{On the doubling condition \\ in the infinite-dimensional setting}
	
	\author{Dariusz Kosz}
	\address{Dariusz Kosz (\textnormal{dkosz@bcamath.org}) \newline
	Basque Center for Applied Mathematics, 48009 Bilbao, Spain \& Wroc{\l}aw University of Science and Technology, 50-370 Wroc{\l}aw, Poland		
	}
	
	\begin{abstract} We present a systematic approach to the problem whether a topologically infinite-dimensional space can be made homogeneous in the Coifman--Weiss sense. The answer to the examined question is negative, as expected. Our leading representative of spaces with this property is $\tom = \TT \times \TT \times \cdots$ with the natural product topology. 
		
		\smallskip	
		\noindent \textbf{2020 Mathematics Subject Classification:} Primary 43A70, 54E99.
		
		\smallskip
		\noindent \textbf{Key words:} doubling condition, quasimetric space, infinite-dimensional torus.
	\end{abstract}
	
	\maketitle

\section{Introduction} \label{S1}

Given a nonempty topological space $(X,\mathcal T)$, its topological dimension ${\rm dim}(X)$ is the smallest number $n \in \NN \cup \{0\}$ with the property that each open cover $\mathcal B$ of $X$ has a~refinement\footnote{That is, a second open cover with all elements being subsets of elements of the first cover.} $\tilde{\mathcal B}$ such that each point $x \in X$ belongs to no more than $n+1$ elements of $\tilde{\mathcal B}$. If no such $n$ exists, then we put ${\rm dim}(X) = \infty$. 

The following note is devoted to explaining why a topologically infinite-dimensional space cannot be doubling. We shall refer to the doubling condition by using the notion of homogeneity in the Coifman--Weiss sense, see Definition~\ref{D2}.

\begin{theorem} \label{T1}
	Let $(X,\mathcal T)$ be a topological space. If ${\rm dim}(X) = \infty$, 
	then it is \textbf{not} possible to find a quasimetric $\rho$ and a Borel measure $\mu$ for which $\mathcal T_\rho = \mathcal T$ and $(X, \rho, \mu)$ is homogeneous in the Coifman--Weiss sense.
\end{theorem}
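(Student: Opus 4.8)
The plan is to prove the contrapositive: assuming there are a quasimetric $\rho$ and a Borel measure $\mu$ with $\mathcal T_\rho=\mathcal T$ for which $(X,\rho,\mu)$ is homogeneous in the Coifman--Weiss sense, I will show ${\rm dim}(X)<\infty$. By the Mac\'ias--Segovia regularisation, $\mathcal T_\rho$ is induced by a genuine metric $d$ with respect to which $\mu$ still satisfies a doubling condition (with a possibly larger constant $C_\mu$); replacing $\rho$ by $d$, I assume from now on that $\rho$ is a metric. Since the covering dimension is a topological invariant and $\mathcal T=\mathcal T_\rho$, it then suffices to produce, for \emph{every} open cover $\mathcal B$ of $X$, an open refinement whose multiplicity is at most a single constant $D=D(C_\mu)$; this forces ${\rm dim}(X)\le D-1<\infty$, the desired contradiction.

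\emph{Step 1: measure doubling gives geometric doubling.} Coifman--Weiss homogeneity provides $0<\mu(B_\rho(x,r))<\infty$ for all $x,r$ and $\mu(B_\rho(x,2r))\le C_\mu\,\mu(B_\rho(x,r))$. A standard packing argument then yields an integer $N=N(C_\mu)$ such that every $\rho$-ball of radius $r$ is covered by at most $N$ $\rho$-balls of radius $r/2$: take a maximal $(r/4)$-separated subset of the ball; the balls of radius $r/8$ about these points are pairwise disjoint and contained in the concentric ball of radius $2r$, which in turn sits inside the ball of radius $3r$ about each of them, so iterating the doubling inequality boundedly many times shows each small ball has measure at least $C_\mu^{-5}$ times that of the radius-$2r$ ball; comparing total masses caps the number of points by $C_\mu^{5}$, and the radius-$r/4$ balls about them cover. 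Iterating the estimate, a ball of radius $r$ is covered by $N^{j}$ balls of radius $2^{-j}r$.

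\emph{Step 2: encoding a cover by a Lipschitz scale function.} Given an open cover $\mathcal B$, define $\delta\colon X\to(0,1]$ by $\delta(x)=\min\bigl(1,\ \sup\{\operatorname{dist}_\rho(x,X\setminus U):x\in U\in\mathcal B\}\bigr)$ (with $\operatorname{dist}_\rho(x,\emptyset)=+\infty$). Since $\mathcal B$ covers $X$ by open sets, $\delta$ is positive; a routine check shows it is $1$-Lipschitz; and by construction $B_\rho(x,\delta(x)/2)$ lies inside some member of $\mathcal B$ for every $x$. Hence it is enough to find a set $S\subseteq X$ for which $\mathcal V:=\{B_\rho(s,\delta(s)/2):s\in S\}$ is an open cover of $X$ of multiplicity at most $D(C_\mu)$, since such a $\mathcal V$ automatically refines $\mathcal B$.

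\emph{Step 3 (the crux): a bounded-multiplicity variable-scale net.} I would take $S$ to be a Whitney-type net: for each $k\in\ZZ$ choose inside $X_k:=\{x:2^{-k}\le\delta(x)<2^{-k+1}\}$ a maximal $2^{-k-j_0}$-separated set $S_k$ (with $j_0\ge1$ a fixed integer), and let $S=\bigcup_k S_k$. Maximality of the $S_k$ together with $\delta(s)\ge2^{-k}$ for $s\in S_k$ makes $\mathcal V$ a cover of $X$. For the multiplicity bound, fix $y\in X$: if $\rho(s,y)<\delta(s)/2$ then the Lipschitz property pins $\delta(s)$ to within a fixed ratio of $\delta(y)$, so only boundedly many scales $k$ contribute to $y$; and for each contributing $k$ all admissible centres lie in a single $\rho$-ball about $y$ of radius comparable to $\delta(y)\approx2^{-k}$ while being $2^{-k-j_0}$-separated, so Step~1 caps their number. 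The product of these two bounds gives the uniform constant $D(C_\mu)$. Carrying out this last verification --- matching the cross-scale Lipschitz control with the fixed-scale doubling count while keeping every constant independent of $\mathcal B$ --- is the only genuinely delicate part; the remainder is bookkeeping. (One could instead shortcut Steps~2--3 by invoking Luukkainen's inequality ${\rm dim}(X)\le{\rm dim}_A(X)$ for metric spaces, together with the classical equivalence ``doubling $\iff$ finite Assouad dimension''.)
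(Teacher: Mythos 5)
Your argument is correct, and its first half coincides with the paper's: the reduction from quasimetrics to metrics via the Mac\'ias--Segovia/Paluszy\'nski--Stempak metrization is exactly Corollary~\ref{C1} (Facts~\ref{F1}~and~\ref{F2}), and your Step~1 (measure doubling $\Rightarrow$ geometric doubling) is the first half of Fact~\ref{F3}, used in Corollary~\ref{C2}. Where you genuinely diverge is the topological endgame. The paper passes through Hausdorff dimension: Lemma~\ref{L1} bounds the growth of separated sets, hence ${\rm dim}_{\mathcal H}(X)<\infty$, and then Fact~\ref{F5} (which rests on the classical theorems ${\rm dim}={\rm ind}$ for separable metric spaces and ${\rm ind}\leq{\rm dim}_{\mathcal H}$, cited as black boxes from Engelking and Edgar) yields ${\rm dim}(X)<\infty$. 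You instead bound the covering dimension \emph{directly}, by producing for every open cover a bounded-multiplicity open refinement via a Whitney-type construction: the $1$-Lipschitz scale function $\delta$, dyadic layers $X_k$, maximal separated nets $S_k$, and the two-sided count (boundedly many contributing scales by the Lipschitz pinning $\delta(s)/2<\delta(y)<3\delta(s)/2$, boundedly many centres per scale by geometric doubling). I checked that the ``delicate'' verification you flag does close up --- in particular a $2^{-k-j_0}$-separated set inside $B_\rho(y,2^{-k})$ has at most $N^{j_0+1}$ elements by iterating your covering estimate, and maximality of $S_k$ together with $\delta(s)\geq 2^{-k}$ does make $\mathcal V$ a cover. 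Your route is longer in bookkeeping but entirely self-contained and elementary, avoiding both Hausdorff measures and the nontrivial coincidence theorems of dimension theory; it is essentially a hands-on proof of Luukkainen's inequality ${\rm dim}(X)\leq{\rm dim}_A(X)$, which is precisely the shortcut you mention in your closing parenthesis. The paper's route is shorter on the page but outsources more to the literature. One tiny point: your construction as written needs no boundedness or separability hypotheses, which matches the generality of Theorem~\ref{T1}.
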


\noindent The same is true if the small ${\rm ind}(X)$ or large ${\rm Ind}(X)$ inductive dimension is used instead\footnote{For the definitions of the inductive topological dimensions, see for example \cite{En}.}. Indeed, homogeneous spaces are metrizable, see Facts~\ref{F1}~and~\ref{F2}, and separable, see \cite[Proposition~2.2]{St}, while all dimensions are topologically invariant and ${\rm ind}(X) = {\rm Ind}(X) = {\rm dim}(X)$ holds for separable metric spaces, see \cite[Preface]{En}.  

It should be emphasized that Theorem~\ref{T1} can be derived from general theory in just a~few lines, by using several results that are already known, as a black box, see the proof in Section~\ref{S2}.  
However, the problem lies at the intersection of different fields of research, and the solution relies on analytical, geometrical, and topological arguments that should be combined in the appropriate way. Therefore, we believe that it is worth making the topic more systematized by presenting a detailed approach which will be both elementary and instructive to the reader. We break down the original problem into several simpler subtasks, explain the reasons for making each reduction, and comment on possible obstacles or alternative paths along the way.   

Studying this kind of problem was originally motivated by a recent question by Roncal, related to the analysis on the infinite-dimensional torus. Since this space can be seen as a model example of $X$ from Theorem~\ref{T1}, we would like to look at the problem from the standpoint of this particular setting first, and only then pass to the general case.  

\subsection*{The infinite-dimensional torus $\tom$.} By $\tom$ we mean $\TT \times \TT \times \cdots$, that is, the product of countably many copies of the one-dimensional torus $\TT$. One can equip $\tom$ with the usual product topology $\mathcal T_{\tom}$ and the normalized Haar measure ${\rm d} x$\footnote{This is just the product of uniformly distributed probabilistic measures on $\TT$.} to make it a compact Hausdorff group and a metrizable probabilistic space. Then a lot of classical analysis can be developed in the context of $\tom$, including harmonic analysis which we focus on here.

Although the structure of $\tom$ seems nice at the first glance, careful looking at specific problems in this setting often leads to negative results or counterexamples to what we know from the Euclidean case $\RR^d$. To mention just a few such issues, one observes:
\begin{itemize}
	\item divergence of Fourier series of certain smooth functions \cite{FR19},
	\item no Lebesgue differentiation theorem for natural differentiation bases \cite{FR20, Ko},
	\item unboundedness of maximal operators \cite{Ko, KMPRR},
	\item problems with introducing a satisfactory theory of weights \cite{KMPRR}.
\end{itemize}

The instances we have chosen share one common feature. Precisely, they all originate in $\RR^d$-related questions to which answers are positive in the qualitative sense for each $d$ but also the worse in the quantitative sense the bigger $d$ is. In many cases the key reason for this phenomenon is the behavior of the so-called doubling condition. Indeed, although the estimate $|B(x,2r)| \leq C(d) |B(x,r)|$ is satisfied uniformly in $x \in \RR^d$ and $r \in (0,\infty)$ for $d$ fixed, the optimal constants $C(d) = 2^d$ grow exponentially with $d$. This fact usually becomes the main obstacle while trying to prove results with dimension-free bounds.

From this point of view one may expect that for $\tom$ the doubling condition is unlikely to hold as, loosely speaking, for each $d$ a piece of $\RR^d$ can be embedded in $\tom$. In this direction, the following question was asked by Roncal.
\begin{question} \label{Q1}
	Can one equip $\tom$ with a quasimetric $\rho$ and a measure $\mu$ so as
	to assure the doubling condition and, at the same time, keep the structure of $\tom$?
\end{question}

\noindent Several remarks regarding Question~\ref{Q1} are in order. 
\begin{enumerate}
	\item In the literature devoted to studying $\tom$, the most popular metric is given by  
	\[
	\rho_{\tom}(x,y) \coloneqq \sum_{n=1}^\infty \frac{\rho_\TT(x_n, y_n)}{2^n},\qquad x=(x_1, x_2, \dots),y=(y_1, y_2, \dots)\in\tom,
	\]
	where the following toric distance is used\footnote{In this formula we refer to the elements $x, y \in \TT$ as numbers belonging to $[0,1)$.}
	\[
	\rho_\TT(x,y) \coloneqq \min \{|x-y|, 1-|x-y|\}, \qquad x,y \in \TT. 
	\] 
	For $(\tom, \rho_{\tom}, {\rm d}x)$ the doubling condition fails to hold, see \cite[Chapter~2.3]{Fe}.  
	\item Bendikov in \cite[Remark~5.4.6]{Be} defines a family of metrics $\rho_A$ on $\tom$ by
	\[
	\rho_A(x,y) \coloneqq \Big( \sum_{n=1}^\infty a_n \rho^2_\TT(x_n,y_n) \Big)^{\frac{1}{2}}, \qquad A = (a_1, a_2, \dots) \in \mathcal A,
	\]
	where $\mathcal A$ is the space of all summable sequences with strictly positive entries. It was asked in \cite[Nota 2.34]{Fe} whether there exists an assumption on a sequence $A \in \mathcal A$ under which $(\tom, \rho_A, {\rm d}x)$ is a space of homogeneous type. This can be seen as a special case of Question~\ref{Q1}.
	
	\item The last part of Question~\ref{Q1} is essential, and omitting it would make the problem trivial. Indeed, in this case the following (not insightful) solution could be given:
	\[
	\textit{Yes, because there exist doubling spaces of the same cardinality as $\tom$.}
	\]
	For example, one could take $\RR$ with the standard distance and Lebesgue measure, and equip $\tom$ with $\rho$ and $\mu$ transferred from $\RR$ via a given bijection $\pi \colon \RR \to \tom$\footnote{In other words, one chooses $\rho$ and $\mu$ so that $\pi$ is a measure preserving isometry.}.
\end{enumerate}

We show that the answer to Question~\ref{Q1} is negative, as expected. This result has important consequences for the whole field of harmonic analysis on $\tom$, as it reveals that this subject goes beyond the theory of doubling spaces. In what follows, we present two theorems referring to either geometrical or topological structure of $\tom$.    
\begin{theorem} \label{T2}
	Suppose that $\rho$ is a bounded translation invariant quasimetric on $\tom$. Then it is \textbf{not} possible to find a measure $\mu$, defined on the $\sigma$-algebra generated by $\rho$, for which $(\tom, \rho, \mu)$ is homogeneous in the Coifman--Weiss sense. 
\end{theorem}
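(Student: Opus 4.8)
The plan is to argue by contradiction. Assume $\rho$ is a bounded translation invariant quasimetric on $\tom$, with quasi-triangle constant $\kappa\ge 1$ and $M:=\sup_{x,y}\rho(x,y)<\infty$, and assume $\mu$ is a measure on the $\sigma$-algebra generated by $\rho$ for which $(\tom,\rho,\mu)$ is homogeneous, with doubling constant $C$; set $\beta:=\log_2 C$. Writing $\|g\|:=\rho(g,0)$, translation invariance gives $\rho(x,y)=\|x-y\|$, and $\|\cdot\|$ is a quasi-norm: $\|0\|=0$, $\|g\|>0$ for $g\neq 0$, $\|{-}g\|=\|g\|$ and $\|g+h\|\le\kappa(\|g\|+\|h\|)$; iterating $\|2g\|\le 2\kappa\|g\|$ a total of $k$ times yields $\|2^{k}g\|\le(2\kappa)^{k}\|g\|$. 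First I would record two consequences of homogeneity. Since $B(x,2M)=\tom$, the total mass $\mu(\tom)$ is finite and positive, and iterating $\mu(B(x,r))\ge C^{-1}\mu(B(x,2r))$ from radius $2M$ down to radius $r$ gives the uniform lower bound $\mu(B(x,r))\ge c\,(r/M)^{\beta}\mu(\tom)$ for all $x\in\tom$ and $0<r\le 2M$, with $c=c(C)>0$.

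Next I would prove a \emph{packing estimate}: if $E\subseteq\tom$ is $s$-separated, meaning $\|x-y\|\ge s$ for distinct $x,y\in E$, then the balls $B(x,s/(3\kappa))$ with $x\in E$ are pairwise disjoint by the quasi-triangle inequality, so summing the lower bound above over these disjoint balls inside $\tom$ forces $|E|\le A\,s^{-\beta}$, with $A=A(C,\kappa,M)$ independent of $E$ and $s$. In effect, a homogeneous translation invariant quasimetric on $\tom$ would have to be metrically doubling; the remaining task is to contradict this using the group structure of $\tom$.

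The heart of the argument is to exhibit finite subgroups of $\tom$ that defeat the packing estimate. Fix $n\in\NN$ with $n>(1+\log_2\kappa)\beta$, let $K_n$ be the subgroup of elements whose first $n$ coordinates lie in $\{0,\tfrac12\}$ and whose remaining coordinates vanish, and set $\delta_n:=\min\{\|g\|:g\in K_n\setminus\{0\}\}$, a fixed strictly positive number (the minimum of $2^{n}-1$ positive reals). For $\ell\in\NN$ put $p=2^{\ell}$ and let $E_\ell$ be the subgroup of elements whose first $n$ coordinates lie in $\{0,\tfrac1p,\dots,\tfrac{p-1}{p}\}$ and whose remaining coordinates vanish, so $|E_\ell|=p^{n}$. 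The point is that $E_\ell$ cannot be very compressed: each nonzero $g\in E_\ell$ generates a cyclic group of order $2^{m}$ with $1\le m\le\ell$, whose unique element of order $2$ is $2^{m-1}g\in K_n\setminus\{0\}$, so $\delta_n\le\|2^{m-1}g\|\le(2\kappa)^{m-1}\|g\|\le p^{\,1+\log_2\kappa}\|g\|$; hence $E_\ell$ is $s_\ell$-separated with $s_\ell:=\delta_n\,p^{-(1+\log_2\kappa)}$. Feeding this into the packing estimate gives $p^{n}=|E_\ell|\le A\,s_\ell^{-\beta}=A\,\delta_n^{-\beta}\,p^{\,(1+\log_2\kappa)\beta}$, and since $n-(1+\log_2\kappa)\beta>0$ while $A\,\delta_n^{-\beta}$ does not depend on $\ell$, letting $\ell\to\infty$ yields the desired contradiction.

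The step I expect to be the main obstacle is the separation estimate for $E_\ell$. For a genuine metric one could bound $\|g\|$ from below coordinate by coordinate, but for a quasimetric the value of $\|\cdot\|$ on a ``mixed'' multi-coordinate element can be far smaller than on any single-coordinate element, so no coordinatewise argument is available. The way around it is to replace $g$ by its order-$2$ multiple lying in the fixed finite set $K_n$, at the cost of only a factor polynomial in $p$; crucially the dimension $n$ is chosen once and for all in terms of $C$ and $\kappa$, so one only needs $\delta_n>0$ with no uniformity in $n$, while letting $p\to\infty$ supplies the super-polynomial count $p^{n}$ that the polynomial packing bound of the previous step cannot sustain — which is exactly where the infinite dimensionality of $\tom$ enters.
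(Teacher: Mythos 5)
Your proof is correct, and its combinatorial core is the same as the paper's: the lattice subgroups with first $n$ coordinates in $\{0,\tfrac1p,\dots,\tfrac{p-1}{p}\}$, translation invariance to reduce separation to the distance from the origin, and the inequality $\|2g\|\le 2\kappa\|g\|$ to propagate a lower bound on $\|g\|$ from a fixed finite set (your $K_n$ is exactly the paper's $E_{n,1}$, and iterating the paper's recursion $r_{n,j+1}\ge\min\{r_{n,1},\tfrac{r_{n,j}}{2}\}$ is the same computation as your passage to the order-two element $2^{m-1}g$). Where you genuinely diverge is in the packaging. The paper first metrizes the quasimetric (Fact~\ref{F2}), then passes to geometric doubling (Fact~\ref{F3}) and to the counting bound on separated sets (Lemma~\ref{L1}); you instead keep the quasimetric throughout and extract the packing bound $|E|\le A\,s^{-\beta}$ directly from the measure, via the standard lower volume estimate $\mu(B(x,r))\gtrsim (r/M)^{\beta}\mu(\tom)$ and disjointness of the balls $B(x,s/(3\kappa))$. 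This buys two things: the argument is self-contained (no appeal to the Paluszy\'nski--Stempak metrization or to the equivalence with geometric doubling), and it carries the quasi-triangle constant $\kappa$ explicitly, absorbing the resulting loss $p^{\log_2\kappa}$ in the separation by choosing $n>(1+\log_2\kappa)\beta$ once and for all. In effect you have carried out the ``modification for quasimetrics'' that the paper only asserts is possible in the remark following its proof. The price is that your constants are tied to the specific measure $\mu$, whereas the paper's route isolates the purely metric statement (no translation invariant metric on $\tom$ is geometrically doubling), which is reused for Theorems~\ref{T1} and~\ref{T3}.
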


\begin{theorem} \label{T3}
	Suppose that $\rho$ is a quasimetric on $\tom$ such that $\mathcal T_\rho = \mathcal T_{\tom}$. Then it is \textbf{not} possible to find a Borel measure $\mu$ for which $(\tom, \rho, \mu)$ is homogeneous in the Coifman--Weiss sense.
\end{theorem}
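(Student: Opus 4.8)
The plan is to show that the existence of a Borel measure $\mu$ making $(\tom,\rho,\mu)$ homogeneous forces $(\tom,\rho)$ to be a geometrically doubling quasimetric space, and then to contradict the topological infinite-dimensionality of $\tom$. As noted after Theorem~\ref{T1}, the assertion is really the instance $X=\tom$ of that theorem; but since $\tom$ is compact one can bypass most of the general machinery, and this is the route I would take. Throughout, let $K\ge 1$ be the quasimetric constant of $\rho$ and, for $x\in\tom$ and $r>0$, let $B(x,r)$ denote the corresponding $\rho$-ball. (Note that $(\tom,\mathcal T_\rho)$ is compact, being homeomorphic to $(\tom,\mathcal T_{\tom})$, so $\rho$ is automatically bounded.)

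First I would establish a \emph{uniform} packing bound: if $\mu$ has doubling constant $C$, then there is $N=N(C,K)\in\NN$ such that no ball $B(x,r)$ contains more than $N$ points that are pairwise $r$-separated (i.e.\ pairwise at $\rho$-distance at least $r$). Indeed, given such points $x_1,\dots,x_m\in B(x,r)$, the balls $B(x_i,r/2K)$ are pairwise disjoint and all contained in $B(x,2Kr)$ by the quasi-triangle inequality, while $B(x,r)\subseteq B(x_i,2Kr)$ for each $i$; iterating the doubling inequality a number of times bounded in terms of $K$ yields a constant $c=c(C,K)>0$ with $\mu(B(x_i,r/2K))\ge c\,\mu(B(x,r))$, and comparing $\sum_i \mu(B(x_i,r/2K))$ with $\mu(B(x,2Kr))\le c'(C,K)\,\mu(B(x,r))$ forces $m\le c'/c$. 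It is exactly here that the finiteness and strict positivity of $\mu$ on balls are used.

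Next, fix $\varepsilon>0$ and choose a maximal $\varepsilon$-separated set $S_\varepsilon\subseteq\tom$ (necessarily finite, by compactness). By maximality $\mathcal U_\varepsilon\coloneqq\{B(x,\varepsilon):x\in S_\varepsilon\}$ is an open cover of $\tom$ of mesh at most $2K\varepsilon$; and if a point $y$ lies in $k$ distinct members of $\mathcal U_\varepsilon$, their centres form an $\varepsilon$-separated subset of $B(y,\varepsilon)$, so $k\le N$ by the previous step. Thus every $\mathcal U_\varepsilon$ has order at most $N$. Since $\tom$ is compact, each open cover of $(\tom,\mathcal T_\rho)$ has a positive Lebesgue number and hence is refined by $\mathcal U_\varepsilon$ for all sufficiently small $\varepsilon$; therefore ${\rm dim}(\tom,\mathcal T_\rho)\le N-1$. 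But the covering dimension is a topological invariant and $\mathcal T_\rho=\mathcal T_{\tom}$, so ${\rm dim}(\tom,\mathcal T_{\tom})\le N-1<\infty$. On the other hand, for each $n\in\NN$ the slice $\TT^n\times\{(0,0,\dots)\}$ is a closed subset of $\tom$ homeomorphic to the $n$-torus, and ${\rm dim}(\TT^n)=n$ by the Lebesgue covering theorem (see for example \cite{En}); monotonicity of the covering dimension on closed subsets then gives ${\rm dim}(\tom,\mathcal T_{\tom})\ge n$ for all $n$, i.e.\ ${\rm dim}(\tom,\mathcal T_{\tom})=\infty$ --- a contradiction. Hence no such $\mu$ exists.

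The main obstacle is the first step: converting the measure-theoretic doubling condition into the scale-invariant geometric packing bound while keeping honest track of the quasimetric constant $K$ (the passage from a metric to a genuine quasimetric is what generates the factors of $K$, and this is also where finiteness and positivity of $\mu$ on balls enter). The remaining two steps are soft --- a standard packing-to-covering construction, together with the classical facts that the covering dimension of a compact metric space is detected by finite open covers of arbitrarily small mesh and that ${\rm dim}(\TT^n)=n$.
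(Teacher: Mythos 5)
Your strategy is sound and genuinely different from the paper's proof of Theorem~\ref{T3}: instead of exhibiting exponentially large separated sets via the Poincar\'e--Miranda theorem and contradicting the geometric doubling bound of Lemma~\ref{L1}, you convert the packing bound directly into an upper bound on the covering dimension of $(\tom,\mathcal T_\rho)$ and play it against ${\rm dim}(\tom)=\infty$. In effect you are proving the compact case of Theorem~\ref{T1} --- by a covering-dimension argument that is arguably more self-contained than the paper's, which routes through Hausdorff dimension and the inequality ${\rm ind}(X)\le{\rm dim}_{\mathcal H}(X)$ --- and then specializing to $X=\tom$. Your first step, that homogeneity forces the uniform packing bound, is correct and is exactly the first half of Fact~\ref{F3}; your verification that ${\rm dim}(\tom)=\infty$ via the closed slices $\TT^n\times\{(0,0,\dots)\}$ is also fine.

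There is, however, one genuine gap, and it sits precisely where you chose to ``keep honest track of $K$'' rather than reduce to a metric. For $K>1$ the balls $B(x,\varepsilon)$ need not be open in $\mathcal T_\rho$ --- they need not even be Borel (see the discussion preceding Definition~\ref{D3} and \cite[Example~1.1]{St}) --- so $\mathcal U_\varepsilon$ is a cover but not, as written, an \emph{open} cover, and the bound ${\rm dim}(\tom,\mathcal T_\rho)\le N-1$ does not follow from it. The Lebesgue-number step suffers from the same defect: that lemma is a statement about compact metric spaces, and applying it to $\rho$ directly requires justification. Both issues disappear if you first invoke Fact~\ref{F2} (equivalently Corollary~\ref{C1}): replace $\rho$ by the equivalent metric $\rho_q$, which generates the same topology and preserves homogeneity of the triple, and then run your argument with $K=1$, where balls are open and the Lebesgue number lemma applies. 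With that reduction in place your proof is complete; without it, the passage from the packing bound to the covering dimension is not justified for a general quasimetric.
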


\noindent Theorem~\ref{T2} is independent of Theorem~\ref{T1}, while Theorem~\ref{T3} is its special case with simpler proof. Both results answer the question in \cite[Nota~2.34]{Fe} in the negative. 

\subsection*{Homogeneous spaces.} Finally, we briefly recall the notion of homogeneity, see \cite{CW}. Alongside, we conduct a short discussion on quasimetrics, strongly inspired by \cite{St}.
\begin{definition} \label{D1}
	A \emph{quasimetric} on a nonempty set $X$ is a mapping $\rho \colon X \times X \to [0,\infty)$ satisfying the following conditions:
	\begin{itemize}
		\item $\rho(x,y) = 0$ if and only if $x = y$,
		\item $\rho(x,y) = \rho(y,x)$,
		\item $\rho(x,y) \leq K (\rho(x,z) + \rho(z,y))$ for some numerical constant $K \in [1,\infty)$.
	\end{itemize}
	If the last condition is satisfied with $K = 1$, then $\rho$ is called a \emph{metric}.
\end{definition}
\noindent There is a canonical way to introduce a topology on $X$ that corresponds to a given quasimetric $\rho$. Namely, for each $x \in X$ and $r \in (0,\infty)$ we denote 
\[
B_\rho(x,r) \coloneqq \{ y \in X : \rho(x,y) < r \},
\] 
the \emph{ball} centered at $x$ and of radius $r$. Then a set $G \subset X$ is said to be \emph{open} -- that is, $G \in \mathcal T_\rho$ -- if for each $x \in G$ there exists $r_x \in (0, \infty)$ such that $B_\rho(x, r_x) \subset G$. 
This definition turns out to be good for several reasons (commented in detail later on):
\begin{enumerate}[label=(\alph*)]
	\item \label{A} it extends the standard definition used in the metric case $K=1$,
	\item \label{B} it ensures that topology properties behave well under perturbations of $\rho$,
	\item \label{C} it always leads to a topology which is metrizable.   
\end{enumerate}
However, one needs to be careful because, quite surprisingly, in the case $K > 1$ it may happen that balls are not open or even Borel according to how $\mathcal T_\rho$ looks like.     

\begin{definition} \label{D2}
	Given a nonempty set $X$, a quasimetric $\rho$, and a Borel measure $\mu$, we call the space $(X,\rho,\mu)$ \emph{homogeneous in the Coifman--Weiss sense} if $\mu(B_\rho) \in (0, \infty)$ holds for all balls $B_\rho \subset X$\footnote{In particular, it is assumed that balls are $\mu$-measurable sets.}, and there exists a numerical constant $C \in [1, \infty)$ such that
	\[
	\mu( B_\rho(x,2r)) \leq C \mu(B_\rho(x,r))
	\quad \text{for all} \quad 
	x \in X, \, r \in (0,\infty).
	\] 
	If this last condition holds, then we say that $\mu$ is \emph{doubling} with respect to $\rho$. 
\end{definition}
\noindent Notice that in general, under the assumption that all balls are measurable, the doubling condition leads to the following trichotomy:
\begin{itemize}
	\item $\mu(B_\rho) = 0$ for all balls $B_\rho \subset X$ and, consequently, $\mu(X) = 0$,
	\item $\mu(B_\rho) \in (0,\infty)$ for all balls $B_\rho \subset X$,
	\item $\mu(B_\rho) = \infty$ for all balls $B_\rho \subset X$.
\end{itemize}
Thus, the condition $\mu(B_\rho) \in (0, \infty)$ in Definition~\ref{D2} excludes only trivial examples.

\subsection*{Acknowledgments} 
The author is indebted to Luz Roncal for drawing his attention to a very interesting problem discussed in this article, as well as for many long discussions which led to a deeper understanding of the subject and significant improvements in the presentation of the results. 

The author was supported by the Basque Government through the BERC 2022-2025 program, by the Spanish State Research Agency through BCAM Severo Ochoa excellence accreditation SEV-2017-2018, and by the Foundation for Polish Science through the START Scholarship.

\section{Proofs of Theorems~\ref{T1},~\ref{T2},~and~\ref{T3}} \label{S2}

\subsection{Analysis: from quasimetric to metric spaces.}

Instead of dealing directly with the problems stated in Section~\ref{S1}, we opt to make some reductions in advance, in order to get rid of several technicalities such as measurability of balls. The first reduction refers to the `metamathematical principle' \cite[Section~3]{St} which says that many quasimetric-related questions can be boiled down to the metric case.

One reason the definition of quasimetric is convenient to use is that if $\rho$ is a quasimetric and $\tilde \rho$ is symmetric and comparable to $\rho$, then $\tilde \rho$ is a quasimetric as well. For metrics the corresponding statement is not true. However, the strength of this flexibility sometimes turns into weakness. Indeed, the definition of topology using balls is perfectly suited to the metric case and we pay a certain cost to ensure \ref{A}. If $K=1$, then the triangle inequality provides that, for an arbitrary reference point $x$ and two points $y,z$ lying close to each other, the distances $\rho(x,y), \rho(x,z)$ are similar. Precisely, we have $| \rho(x,y) - \rho(x,z)| \leq \rho(y,z)$. Thus, if $y \in B_\rho(x,r)$, then also $B_\rho(y,\tilde r) \subset B_\rho(x,r)$ for some appropriately chosen $\tilde r$, so that the ball $B_\rho(x,r)$ is open. This is not true in general if $K > 1$. To see this, take $\RR$ with the standard metric $\rho_{\RR}(x,y) = |x-y|$ and modify it putting
\begin{itemize}
	\item $\tilde \rho_{\RR}(x,y) = 2 \rho_{\RR}(x,y)$ if one of the points is $0$ while the other one belongs to a given set $E \subset (1,2)$,
	\item $\tilde \rho_{\RR}(x,y) = \rho_{\RR}(x,y)$ otherwise.
\end{itemize}
In view of the former discussion $\tilde \rho_{\RR}$ is a quasimetric. Moreover, looking at the notion of convergence, we would expect it to generate the same topology on $\RR$ as the standard one. However, the exact forms of the balls $B_{\tilde \rho_{\RR}}(0,r)$ with $r \in (1,4)$ are strongly dependent on the form of $E$ itself, while  this set is arbitrary. In particular, one can choose $E$ for which none of these balls is Borel\footnote{See \cite[Example~1.1]{St} for a simple example of quasimetric space such that all balls fail to be Borel.}.

Nonetheless, once we realize that, instead of balls, the topology $\mathcal T_\rho$ is what we should look at, things start to get more optimistic. To see this, we need the following definition.            

\begin{definition} \label{D3}
	Two quasimetrics on $X$, $\rho_1$ and $\rho_2$, are called \emph{equivalent} if there exists a numerical constant $M \in [1,\infty)$ such that $M^{-1} \rho_1 \leq \rho_2 \leq M \rho_1$. 
\end{definition}
\noindent It is easy to verify the result below, which one can relate to \ref{B}. 

\begin{fact} \label{F1}
	If $\rho_1$ and $\rho_2$ are equivalent, then $\mathcal T_{\rho_1} = \mathcal T_{\rho_2}$. Also, for a quasimetric $\rho$ and $\alpha \in (0,\infty)$, the mapping $\rho^\alpha$ defines a quasimetric such that $\mathcal T_{\rho^\alpha} = \mathcal T_{\rho}$.  
\end{fact}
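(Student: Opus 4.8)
The plan is to prove the two assertions separately, both by unwinding the definition of $\mathcal T_\rho$ in terms of inclusions of balls. For the first assertion, suppose $M^{-1}\rho_1 \leq \rho_2 \leq M\rho_1$. The key observation is a sandwich relation between balls: for every $x \in X$ and $r \in (0,\infty)$,
\[
B_{\rho_1}(x, r/M) \subset B_{\rho_2}(x, r) \subset B_{\rho_1}(x, Mr),
\]
which follows immediately from the two-sided comparison. Now take $G \in \mathcal T_{\rho_1}$ and $x \in G$; by definition there is $r_x$ with $B_{\rho_1}(x, r_x) \subset G$, and then $B_{\rho_2}(x, r_x/M) \subset B_{\rho_1}(x, r_x) \subset G$, so $G \in \mathcal T_{\rho_2}$. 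The reverse inclusion is symmetric (using the other half of the sandwich), giving $\mathcal T_{\rho_1} = \mathcal T_{\rho_2}$.

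For the second assertion, I would first check that $\rho^\alpha$ is a quasimetric. Symmetry and the vanishing-on-the-diagonal property are immediate. For the quasi-triangle inequality, the point is that $t \mapsto t^\alpha$ is subadditive when $\alpha \in (0,1]$ (so $(a+b)^\alpha \leq a^\alpha + b^\alpha$, giving constant $K^\alpha$) and satisfies $(a+b)^\alpha \leq 2^{\alpha-1}(a^\alpha+b^\alpha)$ when $\alpha > 1$ by convexity; in either case one obtains $\rho^\alpha(x,y) \leq \tilde K(\rho^\alpha(x,z) + \rho^\alpha(z,y))$ for a suitable $\tilde K \in [1,\infty)$ depending on $K$ and $\alpha$. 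For the topological claim, observe that $\rho^\alpha(x,y) < r$ if and only if $\rho(x,y) < r^{1/\alpha}$, so $B_{\rho^\alpha}(x,r) = B_\rho(x, r^{1/\alpha})$; since $r \mapsto r^{1/\alpha}$ is a bijection of $(0,\infty)$ onto itself, the two families of balls coincide, and hence so do the induced topologies.

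There is no real obstacle here; the statement is genuinely routine once the sandwich relation between balls is written down. The only mild subtlety worth stating explicitly is that the definition of $\mathcal T_\rho$ quantifies over \emph{some} radius $r_x$ at each point, so one never needs the balls themselves to be open — only that each ball contains a smaller ball of the comparison quasimetric — which is exactly what the inclusions above provide. For this reason I would keep the proof to a few lines, emphasizing the ball inclusions and the identity $B_{\rho^\alpha}(x,r) = B_\rho(x,r^{1/\alpha})$ as the two computational cores.
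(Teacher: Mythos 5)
Your proof is correct and is exactly the routine verification the paper has in mind (the paper omits the proof entirely, remarking only that the fact is "easy to verify"). Both halves — the ball sandwich $B_{\rho_1}(x,r/M)\subset B_{\rho_2}(x,r)\subset B_{\rho_1}(x,Mr)$ and the identity $B_{\rho^\alpha}(x,r)=B_\rho(x,r^{1/\alpha})$ — are the intended computational cores, and your remark that the definition of $\mathcal T_\rho$ only requires containment of some ball (not openness of balls) is precisely the subtlety that makes the argument go through for $K>1$.
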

\noindent The next fact, which justifies \ref{C}, can be used to reduce our problems to the metric case.

\begin{fact} \label{F2}
	Consider a quasimetric $\rho$ on $X$ and take $q \in (0,1]$ satisfying $(2K)^q = 2$. Then
	\[
	\rho_q(x,y) \coloneqq \inf \Big\{ \sum_{j=1}^{n} \rho(x_j, x_{j-1})^q : x=x_0, x_1, \dots, x_n=y, \, n \in \NN \Big\}
	\]
	determines a metric on $X$ which is equivalent to $\rho^q$. Precisely, one has $\rho_q \leq \rho^q \leq 4 \rho_q$.  
\end{fact}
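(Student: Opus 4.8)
The plan is to verify the classical Frink-type metrization argument: that the ``path infimum'' $\rho_q$ is a genuine metric equivalent to $\rho^q$, with the explicit constants $\rho_q \leq \rho^q \leq 4\rho_q$. I would first record the elementary observation that, by the choice of $q\in(0,1]$ with $(2K)^q=2$, the quasi-triangle inequality for $\rho$ upgrades to the inequality $\rho(x,z)^q \leq 2\max\{\rho(x,y)^q,\rho(y,z)^q\}$ for all $x,y,z\in X$; indeed $\rho(x,z)^q \leq K^q(\rho(x,y)+\rho(y,z))^q \leq K^q 2^q \max\{\rho(x,y)^q, \rho(y,z)^q\}$ and $(2K)^q = 2$. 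So the auxiliary function $d \coloneqq \rho^q$ satisfies $d(x,z)\le 2\max\{d(x,y),d(y,z)\}$ and is symmetric with $d(x,y)=0 \iff x=y$.

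Next, the easy half. From the definition of $\rho_q$ as an infimum over chains, taking the trivial one-step chain $x_0=x$, $x_1=y$ gives immediately $\rho_q(x,y)\le \rho(x,y)^q = d(x,y)$, which is the left inequality. Symmetry of $\rho_q$ is clear by reversing chains, and the triangle inequality for $\rho_q$ follows by concatenating a chain from $x$ to $y$ with one from $y$ to $z$ and taking infima; the value $0$ is attained only when $x=y$ once we have the reverse bound $\rho_q \ge \tfrac14 d$, which also forces $\rho_q(x,y)>0$ for $x\ne y$. So the substantive content is entirely in proving
\[
d(x,y) \le 4\sum_{j=1}^n d(x_{j-1},x_j)
\]
for every chain $x=x_0,x_1,\dots,x_n=y$; taking the infimum over chains then yields $d\le 4\rho_q$, i.e. $\rho^q \le 4\rho_q$, and simultaneously confirms $\rho_q$ is a metric.

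This chain estimate is the standard lemma underlying Frink's metrization theorem, and I would prove it by induction on the length $n$ of the chain, using the ``$2$-relaxed'' ultrametric-type inequality $d(x,z)\le 2\max\{d(x,y),d(y,z)\}$. Write $S \coloneqq \sum_{j=1}^n d(x_{j-1},x_j)$. If $n=1$ the claim is trivial (indeed with constant $1$). For $n\ge 2$, split the chain at the largest index $m$ for which $\sum_{j=1}^m d(x_{j-1},x_j)\le S/2$; then the first segment $x_0,\dots,x_m$ and the last segment $x_{m+1},\dots,x_n$ each have cost at most $S/2$, while the single middle link $d(x_m,x_{m+1})\le S$. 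Applying the induction hypothesis to the two shorter subchains gives $d(x_0,x_m)\le 4\cdot S/2 = 2S$ and $d(x_{m+1},x_n)\le 2S$, and then two applications of the relaxed inequality give $d(x_0,x_n)\le 2\max\{d(x_0,x_m), 2\max\{d(x_m,x_{m+1}), d(x_{m+1},x_n)\}\} \le 2\max\{2S, 2\max\{S,2S\}\} = 2\cdot 2S = 4S$. (One must be mildly careful about the degenerate cases $m=0$ or $m=n$, where one segment is empty and the argument simplifies; these are handled directly.) The main obstacle — really the only nonroutine point — is getting the splitting and the bookkeeping of constants exactly right so that the constant $4$, rather than something larger, comes out; this is where the precise choice $(2K)^q=2$ (as opposed to $K^q\le$ something) is used in an essential way. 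Once the chain estimate is in hand, equivalence of $\rho_q$ and $\rho^q$ gives $\mathcal T_{\rho_q}=\mathcal T_{\rho^q}=\mathcal T_\rho$ by Fact~\ref{F1}, so the topology is metrizable, completing the proof.
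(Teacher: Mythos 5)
The paper does not actually prove Fact~\ref{F2}; it points to \cite[Proposition]{PS} (see also \cite{AIN}), so the only question is whether your argument stands on its own. Your architecture is the standard one and is fine as far as it goes: the choice of $q$ upgrades the quasi-triangle inequality to $d(x,z)\le 2\max\{d(x,y),d(y,z)\}$ for $d=\rho^q$, the bound $\rho_q\le\rho^q$ is the one-step chain, symmetry and the triangle inequality for $\rho_q$ come from reversing and concatenating chains, and everything substantive reduces to the chain estimate $d(x_0,x_n)\le 4\sum_j d(x_{j-1},x_j)$. But your proof of that estimate fails at exactly the step that makes the lemma nontrivial. With $S=\sum_j d(x_{j-1},x_j)$ and your split at $m$, the induction hypothesis gives $d(x_0,x_m)\le 2S$ and $d(x_{m+1},x_n)\le 2S$, and two applications of the relaxed inequality then yield
\[
d(x_0,x_n)\le 2\max\bigl\{2S,\,2\max\{S,2S\}\bigr\}=2\max\{2S,4S\}=8S,
\]
not $4S$: you evaluated $2\max\{S,2S\}$ as $2S$ rather than $4S$. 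This is not an off-by-a-constant slip that can be absorbed by relaxing the target. If the induction hypothesis carries constant $C$, the same computation returns $2CS$, so the induction closes for no finite constant (the sub-chains are shorter in length but the recursion depth is not controlled by the cost-halving). This is precisely the well-known gap in Frink's original 1937 argument, and repairing it is the raison d'\^etre of the two references the paper cites.

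The known repair is to run the induction on a strengthened statement in which the two extreme links of the chain carry a smaller weight than the interior ones, e.g.
\[
d(x_0,x_n)\;\le\;2\,d(x_0,x_1)+4\sum_{i=2}^{n-1}d(x_{i-1},x_i)+2\,d(x_{n-1},x_n)\;\le\;4S,
\]
so that when a chain is split, the joining links are promoted from ``endpoint weight'' $2$ in the subchains to ``interior weight'' $4$ in the full chain, which is what absorbs the extra factor of $2$ from the relaxed triangle inequality; see \cite[Proposition]{PS} for the details. Until the chain estimate is established, your argument proves only $\rho_q\le\rho^q$; in particular it does not yet exclude $\rho_q(x,y)=0$ for some $x\ne y$, so it does not even show that $\rho_q$ is a metric.
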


\noindent The proof of Fact~\ref{F2} can be found in \cite[Proposition]{PS}, see also \cite{AIN}. We now explain briefly what is the motivation behind such a definition of $\rho_q$. If $K > 1$, then $\rho(x,y) > \rho(x,z) + \rho(z,y)$ can happen. Thus, to assure the triangle inequality we would like to make the distance between $x$ and $y$ not larger than the right hand side. The same applies to $\rho(x,z), \rho(z,y)$ so we eventually take into account all finite chains going from $x$ to $y$. But then, as the number of intermediate points goes to infinity, the corresponding expressions may go to zero\footnote{For example, if $\tilde \rho(x,y) \coloneqq (x-y)^2$, $x,y \in \RR$, then $\lim_{n \to \infty} \tilde \rho(0, \frac{1}{n}) + \tilde \rho(\frac{1}{n},\frac{2}{n}) + \cdots + \tilde \rho(\frac{n-1}{n}, 1) = 0$.}. Hence, we need to adjust our original idea and, as it turns out, penalizing long chains by using $q$ close to zero does the job perfectly.

\begin{corollary} \label{C1}
	Regarding Theorems~\ref{T1},~\ref{T2},~and~\ref{T3} it is enough to consider metrics. 
\end{corollary}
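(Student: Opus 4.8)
The plan is to show that each of Theorems~\ref{T1},~\ref{T2},~and~\ref{T3} follows from its own statement restricted to the case in which $\rho$ is a metric; we argue by contraposition. Let $\rho$ be a quasimetric on $X$ with triangle constant $K$, take $q \in (0,1]$ with $(2K)^q = 2$ as in Fact~\ref{F2}, and set $d \coloneqq \rho_q$. By Fact~\ref{F2}, $d$ is a metric satisfying $d \leq \rho^q \leq 4d$, so $d$ is equivalent to the quasimetric $\rho^q$, and Fact~\ref{F1} yields $\mathcal T_d = \mathcal T_{\rho^q} = \mathcal T_\rho$. Hence every topological hypothesis in the theorems transfers to $d$: if $\mathcal T_\rho$ equals a prescribed topology (namely $\mathcal T$ in Theorem~\ref{T1} or $\mathcal T_{\tom}$ in Theorem~\ref{T3}), then so does $\mathcal T_d$; the associated Borel $\sigma$-algebras coincide; and since $d$ is a genuine metric its balls $B_d(x,r)$ are $d$-open, hence Borel, hence measurable for any Borel measure. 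For Theorem~\ref{T2} one checks in addition that boundedness and translation invariance are inherited: $d \leq \rho^q \leq (\sup\rho)^q < \infty$, and since translating a chain $x = x_0, x_1, \dots, x_n = y$ by an element $t$ leaves each term $\rho(x_j, x_{j-1})^q$ unchanged, the infimum defining $d(x+t, y+t)$ coincides with the one defining $d(x,y)$.

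It remains to transfer the doubling condition. As $t \mapsto t^q$ is increasing on $[0,\infty)$, the balls of $\rho^q$ are exactly those of $\rho$ with rescaled radii, $B_{\rho^q}(x,r) = B_\rho(x, r^{1/q})$; combining this with $d \leq \rho^q \leq 4d$ gives the sandwich
\[
B_\rho(x, r^{1/q}) \subseteq B_d(x,r) \subseteq B_\rho\bigl(x, (4r)^{1/q}\bigr), \qquad x \in X,\ r \in (0,\infty).
\]
Suppose now, toward a contradiction, that some measure $\mu$ makes $(X, \rho, \mu)$ homogeneous in the Coifman--Weiss sense with doubling constant $C$; by Definition~\ref{D2} every $\rho$-ball is $\mu$-measurable and has $\mu(B_\rho) \in (0,\infty)$. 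The left inclusion gives $\mu(B_d(x,r)) \geq \mu(B_\rho(x, r^{1/q})) > 0$ and the right one gives $\mu(B_d(x,r)) \leq \mu(B_\rho(x, (4r)^{1/q})) < \infty$, so all $d$-balls have positive finite measure. Iterating the doubling inequality for $\rho$ a total of $N \coloneqq \lceil 3/q \rceil$ times (note $8^{1/q} = 2^{3/q} \leq 2^N$) yields
\[
\mu(B_d(x,2r)) \leq \mu\bigl(B_\rho(x, (8r)^{1/q})\bigr) \leq C^N\, \mu\bigl(B_\rho(x, r^{1/q})\bigr) \leq C^N\, \mu(B_d(x,r)).
\]
Thus $(X, d, \mu)$ is homogeneous in the Coifman--Weiss sense with constant $C^N$, and it satisfies whichever structural hypothesis is relevant, contradicting the metric version of the theorem. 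This proves Corollary~\ref{C1}.

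The real mathematical content here is imported, namely the chain construction behind Fact~\ref{F2} (see \cite{PS}); what is left is essentially bookkeeping. The points that deserve care --- the nearest thing to an obstacle --- are: (i) ensuring that the balls $B_d(x,r)$ are genuinely $\mu$-measurable, which is precisely why it matters that $d$ is a metric, so that its balls are open, rather than just a quasimetric; (ii) tracking that the doubling constant degrades only by a fixed power $C \mapsto C^N$ with $N = N(K)$, so that the doubling property is not lost in the reduction; and (iii) for Theorem~\ref{T2}, verifying that boundedness and translation invariance survive the infimum over chains, which is immediate but worth recording.
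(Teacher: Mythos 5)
Your argument is correct and is exactly the route the paper takes: pass to the Paluszyński--Stempak metric $\rho_q$ of Fact~\ref{F2}, use Fact~\ref{F1} to see the topology is unchanged, and check that homogeneity, boundedness, and translation invariance transfer. The paper leaves these verifications to the reader, and your sandwich $B_\rho(x,r^{1/q}) \subseteq B_{\rho_q}(x,r) \subseteq B_\rho(x,(4r)^{1/q})$ together with the $C \mapsto C^{\lceil 3/q\rceil}$ bookkeeping supplies them correctly.
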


\noindent Indeed, by using Facts~\ref{F1}~and~\ref{F2}, one can verify that if $(\tom,\rho,\mu)$ is a quasimetric space which is homogeneous in the Coifman--Weiss sense, then $(\tom,\rho_q, \mu)$ is a homogeneous metric space that enjoys the same topology. Also, if $\rho$ is bounded and translation invariant, then so is $\rho_q$.

From now on, we can concentrate solely on metrics. However, to satisfy the reader's curiosity, we shall comment on which results have their quasimetric analogues. 

\subsection{Geometry: from doubling to geometrically doubling spaces} 

Our next goal is to show that yet another important reduction can be made. Namely, although both $\rho$ and $\mu$ are involved in verifying whether $(X, \rho, \mu)$ is homogeneous or not, it is actually the metric that plays the more important role here. 

It is clear that if $\mu$ is doubling with respect to $\rho$ and the second option in the said trichotomy occurs, then one should not be able to find arbitrarily many disjoint balls of radius $\frac{r}{2}$ centered at points $y \in B_\rho(x,2r)$. Indeed, if that would be the case, then at least one of these balls, say $B_\rho(y_0,\frac{r}{2})$, should have very small measure compared to $\mu(B_\rho(x,4r))$\footnote{This follows because there are many disjoint balls, each of them satisfying $B_\rho(y,\frac{r}{2}) \subset B_\rho(x,4r)$.}, and the doubling condition would fail for one of the balls $B_\rho(y_0,\frac{r}{2}), B_\rho(y_0,r), B_\rho(y_0,2r)$.

The discussion above motivates the following definition.         
	
\begin{definition} \label{D4}
	A quasimetric space $(X,\rho)$ is called \emph{geometrically doubling} if there exists a number $N \in \NN$ such that every ball $B_\rho(x,2r)$ can be covered by no more than $2^N$ balls of radius $r$. In this case, we also say that $\rho$ is \emph{geometrically doubling}. 
\end{definition} 

\noindent It turns out that, in some sense, failing to be geometrically doubling is the only obstacle that prevents a given space from becoming homogeneous after a suitable choice of $\mu$. 

\begin{fact} \label{F3}
	If a metric space $(X,\rho,\mu)$ is homogeneous in the Coifman--Weiss sense, then $\rho$ is geometrically doubling. Conversely, if $\rho$ is a geometrically doubling metric on $X$, then there exists a Borel measure $\mu$ such that $(X,\rho,\mu)$ is homogeneous in the Coifman--Weiss sense, provided that $(X, \rho)$ is complete.  
\end{fact}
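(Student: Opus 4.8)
The plan is to prove the two implications of Fact~\ref{F3} separately, as they require genuinely different ideas.

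\medskip

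\noindent\textbf{Homogeneous $\Rightarrow$ geometrically doubling.} This direction is essentially the observation already sketched in the paragraph preceding Definition~\ref{D4}, and I would make it precise as follows. Suppose $(X,\rho,\mu)$ is homogeneous with doubling constant $C$. Fix a ball $B_\rho(x,2r)$ and take a maximal collection of points $y_1,\dots,y_m \in B_\rho(x,2r)$ which are $r$-separated, i.e.\ $\rho(y_i,y_j) \geq r$ for $i \neq j$. Maximality forces the balls $B_\rho(y_i,r)$ to cover $B_\rho(x,2r)$ (any point not covered could be added to the collection). On the other hand, the balls $B_\rho(y_i, r/2)$ are pairwise disjoint (here I use $K=1$, i.e.\ that $\rho$ is a metric, as granted by Corollary~\ref{C1}; for a quasimetric one separates at scale comparable to $r/(2K)$ instead) and all contained in $B_\rho(x,4r)$ since $\rho(x,y_i) < 2r$. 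Iterating the doubling inequality gives $\mu(B_\rho(x,4r)) \leq C^k \mu(B_\rho(y_i, r/2))$ for a fixed $k$ depending only on how $4r$ relates to $r/2$ — concretely $\mu(B_\rho(x,4r)) \leq C^4 \mu(B_\rho(y_i,r/2))$ after also enlarging each small ball back up past $B_\rho(x,4r)$ via $B_\rho(x,4r) \subseteq B_\rho(y_i, 8r)$ and four doublings. Summing over $i$ and using disjointness yields $m\, \min_i \mu(B_\rho(y_i,r/2)) \leq \mu(B_\rho(x,4r)) \leq C^4 \min_i \mu(B_\rho(y_i,r/2))$, hence $m \leq C^4$. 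So $N$ can be taken with $2^N \geq C^4$, and the covering by the $B_\rho(y_i,r)$ finishes it. The only care needed is that all these balls have finite positive measure, which is exactly the standing assumption in Definition~\ref{D2}.

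\medskip

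\noindent\textbf{Geometrically doubling (and complete) $\Rightarrow$ homogeneous.} This is the substantial direction and I would not reprove it from scratch; it is a known construction (Vol$'$berg--Konyagin, and Luukkainen--Saksman). The route I would follow: first note that geometric doubling of the metric implies, by a standard iteration of Definition~\ref{D4}, that there is a constant so that every ball $B_\rho(x,R)$ contains at most $C_0 (R/\delta)^s$ points that are $\delta$-separated, for some $s>0$ — this is the "finite Assouad dimension" reformulation. Completeness then lets one run the dyadic-cube machinery of Christ/Hytönen--Kairema: for each scale $2^{-j}$ build a family of "cubes" $\{Q^j_\alpha\}$ that partition $X$, are nested across scales, have bounded overlap of their inner/outer balls, and satisfy the small-boundary property. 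The doubling measure is then manufactured scale by scale: put mass on a maximal $2^{-j}$-separated net $\{x^j_\alpha\}$, assigning to $x^j_\alpha$ a weight comparable to $2^{-j s}$ times a correction that records the cube containing it, arrange the weights at level $j+1$ to be consistent with level $j$ (each parent cube distributes its mass among its children in bounded ratio, which is possible precisely because the number of children is bounded above \emph{and} below thanks to geometric doubling plus completeness), and pass to the weak-$*$ limit of the resulting discrete measures. Completeness is what guarantees the limiting measure gives positive mass to every ball — without it the mass could "escape to a missing point." One then checks $\mu(B(x,2r)) \lesssim \mu(B(x,r))$ directly from the uniform comparability of cube masses across one generation.

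\medskip

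\noindent The main obstacle is entirely in the converse (second) direction: constructing the doubling measure. The delicate points are (i) ensuring the measure is \emph{positive} on every ball, which is where completeness enters in an essential way and cannot be dropped (there are incomplete geometrically doubling metric spaces carrying no doubling measure), and (ii) propagating the mass-splitting consistently through infinitely many generations so that the weak-$*$ limit exists and is genuinely doubling rather than merely "doubling along dyadic scales." Since a full, careful treatment would essentially duplicate \cite{PS} or the Hytönen--Kairema construction, I would present this half by invoking those references, and give the short self-contained argument only for the easy direction and for the reduction of geometric doubling to the Assouad-type counting estimate. For our purposes — applying Fact~\ref{F3} to $\tom$ — only the \emph{first} implication is actually used, since the whole point will be that $\tom$ fails to be geometrically doubling; so the emphasis in the write-up can reasonably fall there.
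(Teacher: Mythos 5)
Your proposal is correct and follows essentially the same route as the paper: the forward implication is the standard counting argument with $r$-separated points and disjoint balls $B_\rho(y_i,\frac{r}{2})$ inside $B_\rho(x,4r)$ (the paper phrases it as a contrapositive, you argue directly via a maximal net, but the content is identical), and the converse is, as in the paper, delegated to the Vol'berg--Konyagin/Luukkainen--Saksman construction rather than reproved. Your quantitative bound $m \leq C^4$ and the remark that completeness is essential both check out.
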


\noindent Indeed, the first part of Fact~\ref{F3} is a known fact mentioned by the authors in \cite{CW}, see also \cite{Hy}. Precisely, if $\rho$ is not geometrically doubling, then for each $M \in \NN$ there exist a ball $B_\rho(x,2r)$ and points $y_1, \dots, y_{M} \in B_\rho(x,2r)$ such that $\rho(y_i, y_j) \geq r$ if $i \neq j$, so that the balls $B_\rho(y_1,\frac{r}{2}), \dots, B_\rho(y_{M},\frac{r}{2})$ are disjoint. Then the doubling condition cannot hold in view of the previous discussion. The reverse part is harder and its proof can also be found in \cite{LS}, see also . The quasimetric analogue of Fact~\ref{F3} is also true\footnote{In the reverse part, we additionally assume that $\rho$ is such that all balls are Borel.}. Finally, in general the completeness assumption cannot be ignored\footnote{To see this, consider $\QQ$ and $\rho_\RR$ restricted to $\QQ \times \QQ$, as mentioned in \cite{St}.}.

\begin{corollary} \label{C2}
	Regarding Theorems~\ref{T1},~\ref{T2},~and~\ref{T3} one only needs to look for geometrically doubling metrics satisfying the desired properties. 
\end{corollary}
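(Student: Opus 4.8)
The plan is to obtain Corollary~\ref{C2} by simply chaining together the two reductions already in place. Suppose, towards a contradiction, that one of Theorems~\ref{T1},~\ref{T2}, or~\ref{T3} fails; that is, there is a triple $(X,\rho,\mu)$ — with $X=\tom$ in the latter two cases — which is homogeneous in the Coifman--Weiss sense and carries the extra feature stated in the respective theorem: $\mathrm{dim}(X)=\infty$ for Theorem~\ref{T1}, $\rho$ bounded and translation invariant for Theorem~\ref{T2}, or $\mathcal T_\rho=\mathcal T_{\tom}$ for Theorem~\ref{T3}.

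First I would invoke Corollary~\ref{C1} to replace $\rho$ by the metric $\rho_q$ of Fact~\ref{F2}. The space $(X,\rho_q,\mu)$ is still homogeneous, and the relevant extra feature survives: by Facts~\ref{F1} and~\ref{F2} one has $\mathcal T_{\rho_q}=\mathcal T_{\rho^q}=\mathcal T_\rho$, so the two purely topological conditions (infinite topological dimension of $X$, and $\mathcal T_\rho=\mathcal T_{\tom}$) are untouched; and, as noted after Corollary~\ref{C1}, if $\rho$ is bounded and translation invariant then so is $\rho_q$, directly from its definition as an infimum over chains. Thus we are reduced to a homogeneous \emph{metric} space with the same extra property.

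Next, since $(X,\rho_q,\mu)$ is a homogeneous metric space, I would apply the first (easy) part of Fact~\ref{F3} to conclude that $\rho_q$ is geometrically doubling. Consequently, to reach the desired contradiction and prove all three theorems, it is enough to show that no geometrically doubling metric on $X$ can have the corresponding extra feature — which is exactly the assertion of Corollary~\ref{C2}.

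There is no genuine difficulty at this stage: the argument is pure bookkeeping, and the only point needing (straightforward) verification is that the passage $\rho\mapsto\rho_q$ preserves each of the three properties singled out in the theorems, which has been indicated above. The substantive work — actually ruling out geometrically doubling metrics that are compatible with $\mathcal T_{\tom}$, or bounded and translation invariant, or merely generate the topology of an infinite-dimensional space — is what the subsequent sections will address.
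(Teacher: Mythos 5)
Your proposal is correct and matches the paper's own argument: the paper likewise combines Corollary~\ref{C1} with the first part of Fact~\ref{F3} to conclude that a homogeneous space would carry a geometrically doubling metric with the stated extra property, so it suffices to rule out such metrics. Your extra verification that the passage $\rho\mapsto\rho_q$ preserves each of the three features is exactly the bookkeeping the paper delegates to the remark following Corollary~\ref{C1}.
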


\noindent Indeed, this follows clearly by combining Corollary~\ref{C1} and Fact~\ref{F3}. Precisely, we expect negative answers so it suffices to show that each metric $\rho$ which is either bounded and translation invariant (Theorem~\ref{T2}) or such that $\mathcal T_\rho$ coincides with the given topology (Theorems~\ref{T1}~and~\ref{T3}) cannot be geometrically doubling.     

To use the geometrical doubling property, we introduce the concept of $r$-separated sets.

\begin{definition} \label{D5}
	For a nonempty quasimetric space $(X,\rho)$ we say that a given subset $E \subset X$ is \emph{$r$-separated}, $r \in (0,\infty)$, if $\rho(x,y) \geq r$ for all distinct $x,y \in E$. We denote by $\aleph(X,\rho,r)$ the biggest number $n \in \NN$ such that there exists at least one $r$-separated set with $n$ elements. If arbitrarily large $r$-separated sets can be found, then we put $\aleph(X,\rho,r) = \infty$.
\end{definition}

\noindent The following lemma will be very helpful later on.

\begin{lemma} \label{L1}
	Let $(X,\rho)$ be a bounded metric space. If $\rho$ is geometrically doubling with some $N \in \NN$, then there exists $C \in (0,\infty)$ such that $\aleph(X,\rho,2^{-l}) \leq C 2^{Nl}$ for all $l \in \NN$.  
\end{lemma}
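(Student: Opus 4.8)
The plan is to exploit the geometrically doubling property iteratively, starting from the whole space and halving the radius at each step. Since $(X,\rho)$ is bounded, fix $R \in (0,\infty)$ with $X = B_\rho(x_0, R)$ for some (hence any) $x_0 \in X$, and pick $m \in \NN$ with $2^m \geq R$, so that $X = B_\rho(x_0, 2^m)$. The key observation is that a single application of the geometrically doubling condition lets us cover any ball of radius $2r$ by at most $2^N$ balls of radius $r$; iterating $k$ times covers a ball of radius $2r$ by at most $2^{Nk}$ balls of radius $2^{1-k} r$. Applied to $X = B_\rho(x_0, 2^m)$, after $m + l$ halvings we obtain a cover of $X$ by at most $2^{N(m+l)}$ balls of radius $2^{-l}$.

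The second ingredient is the standard pigeonhole link between covering numbers and separated sets: if $E$ is a $2^{-l}$-separated set and $X$ is covered by balls $B_1, \dots, B_M$ each of radius $2^{-l-1}$, then each $B_i$ contains at most one point of $E$. Indeed, if $y, y' \in E \cap B_i$ were distinct, then $\rho(y,y') \leq \rho(y, c_i) + \rho(c_i, y') < 2^{-l-1} + 2^{-l-1} = 2^{-l}$ (using the triangle inequality, valid since $\rho$ is a metric), contradicting $2^{-l}$-separation. Hence $|E| \leq M$. So I would run the iteration above down to radius $2^{-l-1}$ rather than $2^{-l}$: cover $X = B_\rho(x_0, 2^m)$ by at most $2^{N(m+l+1)}$ balls of radius $2^{-l-1}$, which forces $\aleph(X,\rho,2^{-l}) \leq 2^{N(m+l+1)} = \big(2^{N(m+1)}\big) \cdot 2^{Nl}$. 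Setting $C \coloneqq 2^{N(m+1)} \in (0,\infty)$, which depends only on $X$, $\rho$, and $N$ but not on $l$, yields $\aleph(X,\rho,2^{-l}) \leq C 2^{Nl}$ for all $l \in \NN$, as desired.

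There is no serious obstacle here; the argument is a routine combination of the iterated covering bound and the pigeonhole principle. The one point requiring a little care is bookkeeping the exponents and the radii correctly through the iteration — in particular ensuring the final covering radius is $2^{-l-1}$ (not $2^{-l}$) so that the triangle-inequality pigeonhole step is valid, and tracking that the number of halvings needed to pass from radius $2^m$ down to $2^{-l-1}$ is exactly $m + l + 1$, which is where the harmless extra constant factor $2^{N(m+1)}$ comes from. It is also worth noting that this is precisely the step where we use that $\rho$ is a genuine metric ($K = 1$); for a quasimetric the pigeonhole estimate would lose a factor depending on $K$, though it would still go through with a worse constant.
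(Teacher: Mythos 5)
Your argument is correct and is essentially the paper's own proof: cover $X=B_\rho(x_0,2^m)$ by iterating the doubling condition down to radius $2^{-l-1}$, then use the pigeonhole observation that a ball of radius $r$ cannot contain two points at distance $\geq 2r$, absorbing the $2^{N(m+1)}$ factor into $C$. The paper phrases the last step as ``a suitable reparametrization''; you simply carry out the exponent bookkeeping explicitly, and your closing remark about the quasimetric case losing a $K$-dependent constant matches the paper's comment following the lemma.
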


\begin{proof}
	Take $L \in \ZZ$ such that $\sup_{x,y \in X} \rho(x,y) < 2^L$.  
	Then for an arbitrary reference point $x \in X$ we have $B_\rho(x,2^L) = X$ and iterating the covering procedure we conclude that for each $l \in \NN$ the space $X$ can be covered by $2^{Nl}$ balls of radius $2^{L-l}$, so that $\aleph(X,\rho,2^{L-l+1}) \leq 2^{Nl}$ holds\footnote{To see this, notice that if $\rho(x,y) \geq 2r$, then there is no ball of radius $r$ containing both $x$ and $y$.}. A suitable reparametrization gives the thesis with some $C$ depending on $L,N$.   
\end{proof}
\noindent A quasimetric version of Lemma~\ref{L1} is also true, but with $C2^{Ml}$ instead of $C2^{Nl}$, where $C$ depends on $K,L,N$, while $M$ depends only on $K,N$.  

We are ready to prove the first of the two $\tom$-related theorems.

\begin{proof}[Proof of Theorem~\ref{T2}.]
	Suppose that $\rho$ is a bounded translation invariant metric on $\tom$. For each $n,j \in \NN$ consider the set
	\[
	E_{n,j} = \Big\{ (x_1, \dots, x_n, 0, 0, \dots) \in \tom : x_1, \dots, x_n \in \big \{ 0 \cdot 2^{-j}, 1 \cdot 2^{-j}, \dots , (2^{j}-1) \cdot 2^{-j} \big \} \Big \}.
	\]
	Then $E_{n,j}$ has precisely $2^{nj}$ elements, and it is $r_{n,j}$-seperated with $r_{n,j}$ satisfying 
	\[
	r_{n,j} = \min_{x,y \in E_{n,j} : x \neq y } \rho(x,y)
	=
	\min_{z \in E_{n,j} \setminus \{ {\bf 0} \} } \rho( {\bf 0},z),
	\]
	where ${\bf 0} = (0,0, \dots) \in \tom$ is the neutral element of the group. Indeed, the last equality follows, since $\rho$ is translation invariant and $E_{n,j}$ is a subgroup of $\tom$. 
	
	Let us now observe that if $z \in E_{n,j+1} \setminus \{ {\bf 0} \}$ for some $j \in \NN$, then either $z \in E_{n,1} \setminus \{ {\bf 0} \}$ or $2z \in E_{n,j} \setminus \{ {\bf 0} \}$, see Figure~\ref{Fig1}. In the first case $\rho( {\bf 0},z) \geq r_{n,1}$, while in the second one, by translation invariance and the triangle inequality, one has $\rho({\bf 0}, z) = \frac{1}{2} (\rho( {\bf 0}, z) + \rho(z, 2z)) \geq \frac{1}{2} \rho( {\bf 0}, 2z) \geq \frac{r_{n,j}}{2}$. 
	Thus, $r_{n,j+1} \geq \min \{ r_{n,1}, \frac{r_{n,j}}{2} \}$ and denoting $C_n = r_{n,1}$ we conclude that $r_{n,j} \geq C_n 2^{-j+1}$ for each $j \in \NN$, so that $\aleph(\tom, \rho, C_n 2^{-j+1}) \geq 2^{nj}$. 
	
	Since both $n,j$ may be arbitrarily large, one can use Lemma~\ref{L1} to deduce that $\rho$ cannot be geometrically doubling. Indeed, there is no $N \in \NN$ such that $\aleph(\tom,\rho,2^{-l}) \leq C2^{Nl}$ holds for all $l \in \NN$ with some $C \in (0,\infty)$, as otherwise one gets a contradiction by taking any $n$ greater than $N$ and sufficiently large $j$ depending on $N,C,C_n$.       
\end{proof}

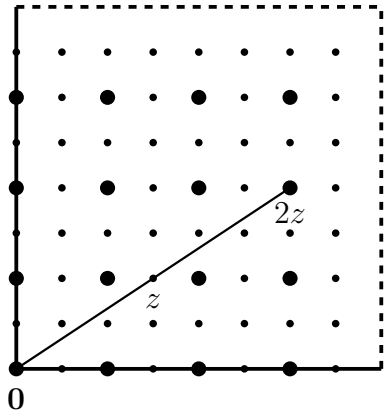
\begin{figure}[H]
	\begin{tikzpicture}
	[
	scale=0.6,
	important line/.style={thick},
	dashed line/.style={dashed, thin},
	every node/.style={color=black,circle,fill}
	]
	
	\node[label={[yshift=-25pt]$\bf 0$},inner sep=2pt] (im) at (0,0)  {};
	\node[label=$ $,inner sep=2pt] (im) at (0,2)  {};
	\node[label=$ $,inner sep=2pt] (im) at (0,4)  {};
	\node[label=$ $,inner sep=2pt] (im) at (0,6)  {};
	
	\node[label=$ $,inner sep=2pt] (im) at (2,0)  {};
	\node[label=$ $,inner sep=2pt] (im) at (2,2)  {};
	\node[label=$ $,inner sep=2pt] (im) at (2,4)  {};
	\node[label=$ $,inner sep=2pt] (im) at (2,6)  {};
	
	\node[label=$ $,inner sep=2pt] (im) at (4,0)  {};
	\node[label=$ $,inner sep=2pt] (im) at (4,2)  {};
	\node[label=$ $,inner sep=2pt] (im) at (4,4)  {};
	\node[label=$ $,inner sep=2pt] (im) at (4,6)  {};
	
	\node[label=$ $,inner sep=2pt] (im) at (6,0)  {};
	\node[label=$ $,inner sep=2pt] (im) at (6,2)  {};
	\node[label={[yshift=-25pt]$2z$},inner sep=2pt] (im) at (6,4)  {};
	\node[label=$ $,inner sep=2pt] (im) at (6,6)  {};
	
	\node[label=$ $,inner sep=1pt] (im) at (0,1)  {};
	\node[label=$ $,inner sep=1pt] (im) at (0,3)  {};
	\node[label=$ $,inner sep=1pt] (im) at (0,5)  {};
	\node[label=$ $,inner sep=1pt] (im) at (0,7)  {};
	
	\node[label=$ $,inner sep=1pt] (im) at (2,1)  {};
	\node[label=$ $,inner sep=1pt] (im) at (2,3)  {};
	\node[label=$ $,inner sep=1pt] (im) at (2,5)  {};
	\node[label=$ $,inner sep=1pt] (im) at (2,7)  {};
	
	\node[label=$ $,inner sep=1pt] (im) at (4,1)  {};
	\node[label=$ $,inner sep=1pt] (im) at (4,3)  {};
	\node[label=$ $,inner sep=1pt] (im) at (4,5)  {};
	\node[label=$ $,inner sep=1pt] (im) at (4,7)  {};
	
	\node[label=$ $,inner sep=1pt] (im) at (6,1)  {};
	\node[label=$ $,inner sep=1pt] (im) at (6,3)  {};
	\node[label=$ $,inner sep=1pt] (im) at (6,5)  {};
	\node[label=$ $,inner sep=1pt] (im) at (6,7)  {};
	
	\node[label=$ $,inner sep=1pt] (im) at (1,0)  {};
	\node[label=$ $,inner sep=1pt] (im) at (1,1)  {};
	\node[label=$ $,inner sep=1pt] (im) at (1,2)  {};
	\node[label=$ $,inner sep=1pt] (im) at (1,3)  {};
	\node[label=$ $,inner sep=1pt] (im) at (1,4)  {};
	\node[label=$ $,inner sep=1pt] (im) at (1,5)  {};
	\node[label=$ $,inner sep=1pt] (im) at (1,6)  {};
	\node[label=$ $,inner sep=1pt] (im) at (1,7)  {};
	
	\node[label=$ $,inner sep=1pt] (im) at (3,0)  {};
	\node[label=$ $,inner sep=1pt] (im) at (3,1)  {};
	\node[label={[yshift=-20pt]$z$},inner sep=1pt] (im) at (3,2)  {};
	\node[label=$ $,inner sep=1pt] (im) at (3,3)  {};
	\node[label=$ $,inner sep=1pt] (im) at (3,4)  {};
	\node[label=$ $,inner sep=1pt] (im) at (3,5)  {};
	\node[label=$ $,inner sep=1pt] (im) at (3,6)  {};
	\node[label=$ $,inner sep=1pt] (im) at (3,7)  {};
	
	\node[label=$ $,inner sep=1pt] (im) at (5,0)  {};
	\node[label=$ $,inner sep=1pt] (im) at (5,1)  {};
	\node[label=$ $,inner sep=1pt] (im) at (5,2)  {};
	\node[label=$ $,inner sep=1pt] (im) at (5,3)  {};
	\node[label=$ $,inner sep=1pt] (im) at (5,4)  {};
	\node[label=$ $,inner sep=1pt] (im) at (5,5)  {};
	\node[label=$ $,inner sep=1pt] (im) at (5,6)  {};
	\node[label=$ $,inner sep=1pt] (im) at (5,7)  {};
	
	\node[label=$ $,inner sep=1pt] (im) at (7,0)  {};
	\node[label=$ $,inner sep=1pt] (im) at (7,1)  {};
	\node[label=$ $,inner sep=1pt] (im) at (7,2)  {};
	\node[label=$ $,inner sep=1pt] (im) at (7,3)  {};
	\node[label=$ $,inner sep=1pt] (im) at (7,4)  {};
	\node[label=$ $,inner sep=1pt] (im) at (7,5)  {};
	\node[label=$ $,inner sep=1pt] (im) at (7,6)  {};
	\node[label=$ $,inner sep=1pt] (im) at (7,7)  {};
	
	\draw[important line, line width=0.5mm] (0,0) -- (0,8);
	\draw[important line, line width=0.5mm] (0,0) -- (8,0);
	\draw[dashed line, line width=0.5mm] (8,0) -- (8,8);
	\draw[dashed line, line width=0.5mm] (0,8) -- (8,8);
	
	\draw[important line, line width=0.3mm] (0,0) -- (6,4);
	
	\end{tikzpicture}
	\caption{Visualization of the sets $E_{n,j}$, $j \in \NN$, for $n=2$. Thick dots and all dots correspond to the sets $E_{2,2}$ and $E_{2,3}$, respectively. Given $z \in E_{2,3}$, we have $2z \in E_{2,2}$ and $2z={\bf 0} \iff z \in E_{2,1}$.}
	\label{Fig1}
\end{figure}

\noindent At the expense of additional technical difficulties, one can show Theorem~\ref{T2} directly for all bounded and translation invariant quasimetrics, modifying the proof presented above.

\subsection{Topology: from Hausdorff to topological dimension}

Next we prove Theorem~\ref{T3}. Here we use the following classical result that can be seen as a special case of the Brouwer fixed-point theorem or a~multidimensional variant of the Darboux theorem. 

\begin{fact}[Poincar\'e--Miranda theorem]\label{F4}
	  For $n \in \NN$ let $f_1, \dots, f_n$ be continuous functions defined on $[0,1]^n$. Assume that for each $i \in \{1, \dots, n\}$ and $(x_1, \dots, x_n) \in [0,1]^n$ there exists $a_i \in \RR$ such that $f_i(x) \leq a_i$ if $x_i = 0$ and $f_i(x) \geq a_i$ if $x_i = 1$. Then there exists $x^* \in [0,1]^n$ such that $(f_1(x^*), \dots, f_n(x^*)) = (a_1, \dots, a_n)$.
\end{fact}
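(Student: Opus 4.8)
The plan is to deduce the statement from Brouwer's fixed-point theorem, which is the natural ambient result alluded to in the preceding sentence; I will also point out the alternative combinatorial route (a Sperner-type lemma on cubical subdivisions) for a reader who prefers a fully self-contained argument. First I would reduce to the case $a_1 = \dots = a_n = 0$: replacing each $f_i$ by $\tilde f_i \coloneqq f_i - a_i$ preserves continuity, rephrases the boundary hypotheses as ``$\tilde f_i(x) \le 0$ whenever $x_i = 0$'' and ``$\tilde f_i(x) \ge 0$ whenever $x_i = 1$'', and reduces the goal to producing $x^* \in [0,1]^n$ with $\tilde f(x^*) = 0$.

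Next I would introduce the continuous self-map $g$ of the cube given by the componentwise truncation
\[
g_i(x) \coloneqq \min\bigl\{ 1, \max\{0, x_i - \tilde f_i(x)\} \bigr\}, \qquad i = 1, \dots, n.
\]
Each $g_i$ is a composition of continuous functions taking values in $[0,1]$, so $g$ maps $[0,1]^n$ continuously into itself, and Brouwer's theorem furnishes a fixed point $x^* = g(x^*)$. The heart of the proof is then a short coordinatewise case analysis showing $\tilde f_i(x^*) = 0$ for every $i$. If $x_i^* \in (0,1)$, the truncation in the $i$-th coordinate must be inactive (otherwise $x_i^*$ would lie in $\{0,1\}$), which forces $x_i^* = x_i^* - \tilde f_i(x^*)$, hence $\tilde f_i(x^*) = 0$. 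If $x_i^* = 0$, the boundary hypothesis gives $\tilde f_i(x^*) \le 0$, so $x_i^* - \tilde f_i(x^*) = -\tilde f_i(x^*) \ge 0$ and the fixed-point relation becomes $0 = \min\{1, -\tilde f_i(x^*)\}$, which forces $-\tilde f_i(x^*) = 0$; the case $x_i^* = 1$ is handled symmetrically using $\tilde f_i(x^*) \ge 0$. Undoing the shift gives $f(x^*) = (a_1, \dots, a_n)$, as desired.

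I expect the only delicate point to be the bookkeeping in the two boundary cases of the case analysis: one must check that the truncation cannot ``conceal'' a nonzero value of $\tilde f_i$ on the faces $\{x_i = 0\}$ and $\{x_i = 1\}$, and this is precisely where the sign hypotheses are used. Everything else — continuity of $g$, the reduction to $a_i = 0$, the interior case — is routine. If instead one wishes to avoid invoking Brouwer, the reduction above is unchanged but the real work moves to establishing the underlying combinatorial input (a cubical Sperner lemma, or equivalently the non-retraction statement that the identity on $\partial[0,1]^n$ admits no continuous extension $[0,1]^n \to \partial[0,1]^n$); proving that is then the main obstacle.
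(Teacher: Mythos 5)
Your proof is correct. The paper gives no proof of Fact~\ref{F4} at all --- it is quoted as a classical result with the remark that it is a special case of the Brouwer fixed-point theorem --- and your argument is precisely the standard derivation being alluded to: the truncated displacement map $g_i(x) = \min\{1,\max\{0, x_i - \tilde f_i(x)\}\}$ is continuous on the cube, Brouwer supplies a fixed point, and your three-way case analysis (interior coordinate, face $x_i^*=0$, face $x_i^*=1$) correctly uses the sign hypotheses to rule out the truncation concealing a nonzero value of $\tilde f_i$.
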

\noindent Thanks to Fact~\ref{F4} we can adapt the idea behind the previous proof to the case of metrics which are not necessarily translation invariant.  

\begin{proof}[Proof of Theorem~\ref{T3}] 
	Suppose that $\rho$ is such that $\mathcal T_\rho = \mathcal T_{\tom}$. Then $\rho$ is bounded because $(\tom, \rho)$ is compact. Moreover, $E \subset \tom$ is $\mathcal T_\rho$-compact if and only if it is $\mathcal T_{\tom}$-closed.
	
	For each $n \in \NN$ consider the set
	\[
	E_n \coloneqq \big \{ (x_1, \dots, x_n, 0,0, \dots) \in \tom : (x_1, \dots, x_n) \in [0,\tfrac{1}{2}]^n \big \}
	\]
	which will play the role of the cube $[0,1]^n$ from Fact~\ref{F4}.
	Denote also, for $i \in \{1, \dots, n\}$,
	\[
	E_{n,i}^- \coloneqq \{ x \in E_n : x_i = 0 \}
	\quad \text{and} \quad
	E_{n,i}^+ \coloneqq \{ x \in E_n : x_i = \tfrac{1}{2} \},
	\]
	and set
	\[
	C_n \coloneqq \inf \big\{ \rho(x,y) : x \in E_{n,i}^-, \,  y \in E_{n,i}^+ \text{ for some }  i \in \{1, \dots, n\} \big\}.
	\]
	Since $E_{n,i}^-, E_{n,i}^+$ are compact and $(x,y) \mapsto \rho(x,y)$ is continuous\footnote{Here it is important that $\rho$ is a metric.} on $\tom \times \tom$, we have $C_n \in (0, \infty)$. Define auxiliary functions
	\[
	f_{n,i}(x) \coloneqq \inf_{ y \in E_{n,i}^-} \rho(x,y), \qquad x \in E_n.  
	\]
	Using compactness again we deduce that each $f_{n,i}$ is continuous\footnote{Indeed, assuming $f_{n,i}(x) \geq f_{n,i}(x')$, we get
	$0 \leq f_{n,i}(x) - f_{n,i}(x') \leq \rho(x,y^*) - \rho(x',y^*) \leq \rho(x,x')$, by taking $y^* \in E_{n,i}^-$ for which the value $f_{n,i}(x)$ is attained. Again, it is important here that $\rho$ is a metric.}. Moreover, $f_{n,i}(x) = 0$ for $x \in E_{n,i}^-$ and $f_{n,i}(x) \geq C_n$ for $x \in E_{n,i}^+$. 
	
	Next, choose $j \in \NN$ and take $v = (v_1, \dots, v_n) \in \big\{ \frac{C_n}{2^j}, \frac{2C_n}{2^j}, \dots, \frac{2^jC_n}{2^j} \big\}^n$. By Fact~\ref{F4} there exists $x_v \in E_n$ such that $(f_{n,1}(x_v), \dots, f_{n,n}(x_v)) = v$. We shall show that the set
	\[
	E_{n,j} \coloneqq \Big\{ x_v : v \in \big\{ \tfrac{C_n}{2^j}, \tfrac{2C_n}{2^j}, \dots, \tfrac{2^{j}C_n}{2^j} \big\}^n \Big\}
	\] 
	of cardinality $2^{nj}$ is $\frac{C_n}{2^j}$-separated so that $\aleph(\tom, \rho, \frac{C_n}{2^j}) \geq 2^{nj}$ holds. To this end, let $x_v, x_{v'} \in E_{n,j}$ correspond to distinct vectors $v, v'$ and assume that $v_{i_0}' > v_{i_0}$ for some ${i_0} \in \{1,\dots, n\}$. Then $f_{n,i_0}(x_{v'}) \geq f_{n,i_0}(x_v) + \frac{C_n}{2^j}$ by the definition of $f_{n,i_0}$, while the triangle inequality gives $f_{n,i_0}(x_{v'}) \leq f_{n,i_0}(x_v) + \rho(x_v, x_{v'})$, see Figure~\ref{Fig2}. Thus, $\rho(x_v, x_{v'}) \geq \frac{C_n}{2^j}$. 
	
	Both $n,j$ may be arbitrarily large so one can use Lemma~\ref{L1} to deduce that $\rho$ cannot be geometrically doubling. Indeed, there is no $N \in \NN$ such that $\aleph(\tom,\rho,2^{-l}) \leq C 2^{Nl}$ holds for all $l \in \NN$ with some $C \in (0,\infty)$, as otherwise one gets a contradiction by taking any $n$ greater than $N$, and sufficiently large $j$ depending on $N,C,C_n$. 
\end{proof}

\begin{figure}[H]
	\begin{tikzpicture}
	[
	scale=0.6,
	important line/.style={thick},
	dashed line/.style={dashed, thin},
	every node/.style={color=black,circle,fill}
	]
	
	\node[label=$ $,inner sep=2pt] (im) at (1.125,1)  {};
	\node[label={[xshift=-8pt, yshift=-8pt]$x_v$},inner sep=2pt] (im) at (1.375,3)  {};
	\node[label=$ $,inner sep=2pt] (im) at (1.625,5)  {};
	\node[label=$ $,inner sep=2pt] (im) at (1.875,7)  {};
	
	\node[label={[xshift=10pt, yshift=-10pt]$x_{v'}$},inner sep=2pt] (im) at (3.875,1)  {};
	\node[label=$ $,inner sep=2pt] (im) at (3.625,3)  {};
	\node[label=$ $,inner sep=2pt] (im) at (3.375,5)  {};
	\node[label=$ $,inner sep=2pt] (im) at (3.125,7)  {};
	
	\node[label=$ $,inner sep=2pt] (im) at (5.125,1)  {};
	\node[label=$ $,inner sep=2pt] (im) at (5.375,3)  {};
	\node[label=$ $,inner sep=2pt] (im) at (5.625,5)  {};
	\node[label=$ $,inner sep=2pt] (im) at (5.875,7)  {};
	
	\node[label=$ $,inner sep=2pt] (im) at (7.25,1)  {};
	\node[label=$ $,inner sep=2pt] (im) at (7.75,3)  {};
	\node[label=$ $,inner sep=2pt] (im) at (7.75,5)  {};
	\node[label=$ $,inner sep=2pt] (im) at (7.25,7)  {};
	
	\draw[important line, line width=0.5mm] (0,0) -- (0,8);
	\draw[important line, line width=0.5mm] (0,0) -- (8,0);
	\draw[important line, line width=0.5mm] (8,0) -- (8,8);
	\draw[important line, line width=0.5mm] (0,8) -- (8,8);
	
	\node[label=$E_{2,1}^-$,color=white] (im) at (-1,2.5)  {};
	\node[label=$E_{2,1}^+$,color=white] (im) at (9,2.5)  {};
	
	\node[label=$E_{2,2}^-$,color=white] (im) at (4,-2.5)  {};
	\node[label=$E_{2,2}^+$,color=white] (im) at (4,7.5)  {};
	
	\draw[dashed line, line width=0.5mm] (0,1) -- (8,1);
	\draw[dashed line, line width=0.5mm] (0,3) -- (8,3);
	\draw[dashed line, line width=0.5mm] (0,5) -- (8,5);
	\draw[dashed line, line width=0.5mm] (0,7) -- (8,7);
	
	\draw[dashed line, line width=0.5mm] (1,0) -- (2,8);
	\draw[dashed line, line width=0.5mm] (4,0) -- (3,8);
	\draw[dashed line, line width=0.5mm] (5,0) -- (6,8);
	\draw[dashed line, line width=0.5mm] (7,0) -- (8,4);
	\draw[dashed line, line width=0.5mm] (7,8) -- (8,4);
	
	\node[label={[xshift=-7pt, yshift=-13pt]$y^*$},inner sep=1pt] (im) at (0,2)  {};
	
	\draw[important line, line width=0.3mm] (0,2) -- (3.875,1);
	\draw[important line, line width=0.3mm] (1.375,3) -- (3.875,1);
	\draw[important line, line width=0.3mm] (1.375,3) -- (0,2);
	
	\end{tikzpicture}
	\caption{Visualization of the `cube' $E_n$ for $n=2$. Thick dots are the elements of the set $E_{2,j}$ with $j=2$, while dashed lines represent the corresponding level sets of the functions $f_{2,1}, f_{2,2}$ (this is an oversimplified scheme, as in general the structure of the level sets may be much more complicated). We pick two points $x_v, x_{v'} \in E_{2,2}$ corresponding to vectors $v,v'$ such that $v'_{i_0} > v_{i_0}$ for $i_0 = 1$. By $y^*$ we denote the point from $E_{2,1}^-$ for which $f_{2,1}(x_v)$ is attained. Then $f_{2,1}(x_{v'}) \leq \rho(x_{v'},y^*) \leq f_{2,1}(x_v) + \rho(x_v, x_{v'})$.}
	\label{Fig2}
\end{figure}
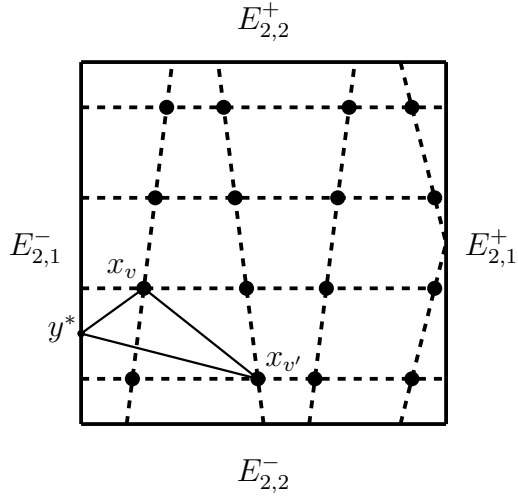
\noindent This time it was crucial that only metrics, not quasimetrics, were considered in the proof.
  
It remains to show Theorem~\ref{T1}. To this end, let us recall the concept of the Hausdorff dimension. Given a metric space $(X,\rho)$, for each $E \subset X$ we define
\[
\mathcal H^d(E) \coloneqq \lim_{\delta \downarrow 0} \Big( \inf
\Big\{  \sum_{i=1}^\infty \big({\rm diam} (U_i) \big)^d : E \subset \bigcup_{i=1}^\infty U_i, \, {\rm diam} (U_i) < \delta  \Big \} \Big),
\qquad d \in [0,\infty), 
\]
and put ${\rm dim}_{\mathcal H}(E) \coloneqq \inf \{ d \in [0,\infty) : \mathcal H^d(E) = 0\}$\footnote{We use the convention ${\rm dim}_{\mathcal H}(E) = \infty$ if the infimum is taken over the empty set.}. The proof of Lemma~\ref{L1} reveals that if $(X,\rho)$ is geometrically doubling with some $N \in \NN$, and $x \in X$ is any reference point, then ${\rm dim}_{\mathcal H}(X) = \lim_{r \to \infty} {\rm dim}_{\mathcal H}(B_\rho(x,r)) \leq N$. Similarly, the proof of Theorem~\ref{T3} hints that $[0,1]^n$ equipped with any metric generating the standard topology should have Hausdorff dimension at least $n$. The latter is a special case of the following general result.

\begin{fact}\label{F5}
	Let $(X, \rho)$ be a separable metric space. Then ${\rm dim}(X) \leq {\rm dim}_{\mathcal H}(X)$. 
\end{fact}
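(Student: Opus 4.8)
The plan is to obtain this from the classical implication, essentially due to Szpilrajn (see also \cite{En}), that for a separable metric space $X$ and every integer $n \geq 0$ one has $\mathcal H^{n+1}(X) = 0 \Rightarrow {\rm dim}(X) \leq n$. Granting it, the lemma follows immediately: if ${\rm dim}_{\mathcal H}(X) = \infty$ there is nothing to prove, and otherwise, with $n \coloneqq \lfloor {\rm dim}_{\mathcal H}(X) \rfloor$, one has $n+1 > {\rm dim}_{\mathcal H}(X)$, hence $\mathcal H^{n+1}(X) = 0$ by the very definition of ${\rm dim}_{\mathcal H}$, and therefore ${\rm dim}(X) \leq n \leq {\rm dim}_{\mathcal H}(X)$. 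Since the present note aims to be self-contained, rather than cite the implication I would reprove it by induction on $n$.

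The engine of the induction is a slicing estimate for the distance function. Fix a reference point $x \in X$ and, for $r \in (0,\infty)$, put $S_r \coloneqq \{ y \in X : \rho(x,y) = r \}$; the ``spheres'' $S_r$ are pairwise disjoint and $\partial B_\rho(x,r) \subseteq S_r$, because $\rho(x,\cdot)$ is continuous. The claim I would establish is that \emph{if $\mathcal H^{n+1}(X) = 0$, then $\{ r \in (0,\infty) : \mathcal H^{n}(S_r) = 0 \}$ is dense in $(0,\infty)$}. This is a coarea-type argument: given $\delta, \eta > 0$, choose a cover $X = \bigcup_i U_i$ with ${\rm diam}(U_i) < \delta$ and $\sum_i ({\rm diam}\, U_i)^{n+1} < \eta$; since $\rho(x,\cdot)$ is $1$-Lipschitz, the set $U_i$ can meet $S_r$ only for $r$ in a closed interval $I_i$ of length ${\rm diam}(U_i)$, so the $U_i$ with $r \in I_i$ cover $S_r$ and, writing $\mathcal H^{n}_\delta$ for the pre-measure appearing before the limit in the definition of $\mathcal H^{n}$, one gets $\mathcal H^{n}_\delta(S_r) \leq g(r)$ with $g(r) \coloneqq \sum_{i : r \in I_i} ({\rm diam}\, U_i)^{n}$ and $\int_0^\infty g(r)\,dr = \sum_i ({\rm diam}\, U_i)^{n+1} < \eta$. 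Making $\delta$ and $\eta$ small and controlling $|\{ g > t \}|$ by Markov's inequality, one concludes that the radii with $\mathcal H^{n}(S_r) > 0$ cannot fill any nonempty open interval, which is the asserted density.

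Given the slicing estimate, the induction is routine. Recall that ${\rm dim} = {\rm ind}$ for separable metric spaces (see \cite{En}), so it is enough to bound the small inductive dimension, and I would use its recursive characterization, its monotonicity under subspaces, and the fact that subspaces of separable metric spaces are again separable metric. For $n = 0$, the slicing estimate says that $\mathcal H^{0}(S_r) = 0$ --- that is, $S_r = \emptyset$ --- for a dense set of $r$, so every $x$ has arbitrarily small clopen balls $B_\rho(x,r)$ and hence ${\rm ind}(X) \leq 0$. For the step from $n-1$ to $n$: given $x$ and an open $U \ni x$, pick $\epsilon$ with $B_\rho(x,\epsilon) \subseteq U$ and, by the slicing estimate, a radius $r \in (0,\epsilon)$ with $\mathcal H^{n}(S_r) = 0$; since $S_r$ is a separable metric space, the inductive hypothesis (applied with $n-1$ in place of $n$) gives ${\rm dim}(S_r) \leq n-1$, whence ${\rm ind}(\partial B_\rho(x,r)) \leq {\rm ind}(S_r) \leq n-1$, so $V \coloneqq B_\rho(x,r)$ is the required neighborhood and, as $x$ and $U$ were arbitrary, ${\rm ind}(X) \leq n$.

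The main obstacle is the slicing estimate, and inside it the passage from ``$\int_0^\infty \mathcal H^{n}(S_r)\,dr = 0$'' to the usable density statement, since $r \mapsto \mathcal H^{n}(S_r)$ need not be Borel. I would sidestep this by never integrating $\mathcal H^{n}(S_r)$ itself, but only the explicit Borel majorants $g$ above: on a fixed interval $J$ one estimates $|\{ r \in J : g(r) > t \}|$ by Markov's inequality, and chooses $\delta$ and $\eta$ along a suitable doubly indexed sequence so that the bad radii of $J$ are covered by intervals of total length strictly below $|J|$. Everything else --- the recursive behaviour of ${\rm ind}$, the identity ${\rm dim} = {\rm ind}$ in the separable metric setting, and the behaviour of $\mathcal H^{d}$ under $1$-Lipschitz maps and under increasing $d$ --- is standard, which is also why one may, if preferred, simply invoke the result from \cite{En}.
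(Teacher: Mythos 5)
Your argument is correct, and at the top level it follows the same decomposition as the note: you reduce to the small inductive dimension via the identity ${\rm dim} = {\rm ind}$ for separable metric spaces from \cite{En}, and then establish ${\rm ind}(X) \leq {\rm dim}_{\mathcal H}(X)$. The difference is that the note treats the second inequality as a black box, citing \cite[Section~3.1]{Ed}, whereas you reprove it; what you write out is precisely the classical Szpilrajn argument that Edgar's text records, namely the implication $\mathcal H^{n+1}(X) = 0 \Rightarrow {\rm ind}(X) \leq n$ proved by induction, with the coarea-type slicing of the distance function supplying spheres $S_r$ of vanishing $\mathcal H^{n}$-measure for (in fact almost every, not merely a dense set of) radii $r$, and with the measurability of $r \mapsto \mathcal H^{n}(S_r)$ correctly sidestepped by integrating the explicit majorants $g$ instead. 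Your reduction from the inequality to the implication is also sound with the paper's convention ${\rm dim}_{\mathcal H}(E) = \inf\{d : \mathcal H^{d}(E) = 0\}$, since $\mathcal H^{d}(E) = 0$ for every $d$ strictly above that infimum. So the proposal buys self-containedness for the analytic half of the statement at the cost of length; the only ingredient still imported wholesale is the coincidence of the covering and inductive dimensions in the separable metric setting, which is genuinely a theorem of dimension theory and reasonable to cite.
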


\noindent Indeed, ${\rm dim}(X) = {\rm ind}(X)$ follows for separable metric spaces, see \cite[Preface]{En}, while ${\rm ind}(X) \leq {\rm dim}_{\mathcal H}(X)$ follows for metric spaces, see \cite[Section~3.1]{Ed}. For separable quasimetric spaces ${\rm dim}(X) \leq \frac{1}{q} {\rm dim}_{\mathcal H}(X)$ holds with $q \in (0,1]$ satisfying $(2K)^q = 2$.

\begin{proof}[Proof of Theorem~\ref{T1}]
	Assume that $\rho$ is a metric for which $\mathcal T_\rho = \mathcal T$ and $(X, \rho)$ is geometrically doubling. Then ${\rm dim}_{\mathcal H}(X)$ is finite by Lemma~\ref{L1}. Also, $(X,\rho)$ is separable because the geometrical doubling property forces that for any $M \in \NN$ the whole space $X$ can be covered by countably many balls of radius $2^{-M}$. Thus, Fact~\ref{F5} gives
	\[
	{\rm dim}(X) \leq
	{\rm dim}_{\mathcal H}(X) < \infty = {\rm dim}(X).
	\]
	 This contradicts the existence of $\rho$ with the desired properties.
\end{proof} 

The following remarks highlight why the problem stated in Question~\ref{Q1} was delicate. 
\begin{remark} \label{R1}
	In general, being geometrically doubling is \textbf{not} a topological property. 
\end{remark}

\noindent Indeed, one can change $\rho_\RR$ to make $\RR$ with its natural topology not geometrically doubling. It suffices to take $\rho \coloneqq \log(1 + \rho_\RR)$ and consider the balls $B_\rho(0,n)$ with $n \to \infty$. 

\begin{remark} \label{R2}
	The subspace $\{0, \frac{1}{2} \}^{\omega} \subset \tom$ with the topology inherited from $\tom$ can be made homogeneous in the Coifman--Weiss sense. 
\end{remark}

\noindent Indeed, it suffices to identify $\{0, \frac{1}{2} \}^{\omega}$ with the classical Cantor set $\mathcal C \subset [0,1]$ with the metric $\rho_{\mathcal C}$ obtained by restricting $\rho_\RR$ to $\mathcal C \times \mathcal C$. This can be done through the bijection $\pi \colon \{0, \frac{1}{2} \}^{\omega} \to \mathcal C$ given by $\pi(x) \coloneqq \sum_{n=1}^\infty \frac{4 x_n}{3^n}$. Then the usual Cantor measure $\mu_{\mathcal C}$ on $\mathcal C$ is doubling, since for all $x \in \mathcal C$ and $n \in \NN$ one has $\mu_{\mathcal C}(B_{\rho_{\mathcal C}}(3^{-n})) = 2^{-n}$, see also \cite{WWW}.

\end{document}